\numberwithin{equation}{section}
\begin{document}

\title
{Topologies and sheaves on causal manifolds}

\author{Pierre Schapira}
\maketitle

\begin{abstract}
A causal manifold  $(M,\gamma)$ is a manifold  $M$ endowed with a closed  convex  proper cone $\gamma$ in the tangent bundle $TM$ satisfying suitable properties. Let $\lambda=\gammc\subset T^*M$ be the antipodal of the polar cone of $\gamma$.
  An open set $U$ of $M$ is called $\gamma$-open if its Whitney normal cone contains  $\Int(\gamma)$. Similarly,  $U$ is  called $\lambda$-open if the micro-support of the constant sheaf on $U$ is contained in $\lambda$. 
We begin by   proving that the two notions coincide. 

Next, we prove that if $(M,\gamma)$ admits a ``future time function'' (in particular if it is globally hyperbolic) the   functor of  direct images establishes  an equivalence of triangulated categories
between  the derived category of sheaves on $M$ micro-supported by $\lambda$ and the derived category of sheaves on $\Mg$, the manifold $M$ endowed with the $\gamma$-topology. This generalizes a result of~\cite{KS90} which treated 
the case of a constant cone in a vector space. 
\end{abstract}

\tableofcontents

\section*{Introduction}
We choose a commutative unital ring $\cor$ of finite global dimension and we shall consider sheaves of $\cor$-modules (in the derived setting). Our main reference on this subject is the book~ \cite{KS90}. 

Consider first a real finite dimensional vector space $\BBV$ and a closed convex proper cone $\theta\subset\BBV$ with vertex at $0$. Set 
$\gamma=\BBV\times\theta$. 
The $\gamma$-topology was first defined in loc.\ cit.~\S~3.5  (with slightly different notations): an open subset $V$ of $\BBV$ is $\gamma$-open if $V=V+\theta$. 
Denote by $\BBV_\gamma$ the set $\BBV$ endowed with the $\gamma$-topology and denote by $\phig\cl \BBV\to \BBV_\gamma$ the continuous map associated with the identity. Denote by $\Derb_{\gammc}(\cor_\BBV)$ the bounded derived category of sheaves on $\BBV$ micro-supported by $\gammc$, the antipodal of the polar cone to $\gamma$. 
It is  proved in loc.\ cit.\ that the functor of direct image $\roim{\phig}$ induces an equivalence of this triangulated category with the derived category $\Derb(\cor_{\BBV_\gamma})$. 

The aim of this paper is to generalize this result to causal manifolds.

A causal manifold  $(M,\gamma)$ is  a manifold $M$ endowed with  a closed  convex proper cone $\gamma$  in the tangent bundle $TM$ with the property that the projection $\tau_M\cl TM\to M$ is surjective when restricted to $\Int(\gamma)$ and $\Int(\gamma)$ is the union of the $\Int(\gamma_x)$ for $x\in M$. One also naturally defines the notion of a morphism of causal manifolds. 

The  $\gamma$-topology  $\Mg$ on $M$ and the map $\phig\cl M\to \Mg$ were extended to causal manifolds in~\cite{JS16}. A  subset $A$ of $M$ is   
a $\gamma$-set if its Whitney normal cone contains $\Int\gamma$. A $\gamma$-open subset is an open subset of $M$ which is a  $\gamma$-set. 
Our first result asserts that $U$ is $\gamma$-open if and only if 
 $\SSi(\cor_U)$, the micro-support of the constant sheaf on $U$, is contained in  $\lambda=\gammc$, the antipodal of the polar cone to $\gamma$.
 
One can then define the $\lambda$-future $\futl{x_0}$ of $x_0\in M$ as the smallest closed subset of $M$ containing $x_0$ and whose micro-support is contained in $\lambda^a$. This set is also the closure of $\futg{x_0}$,  the smallest $\gamma$-set of $M$ containing $x_0$. We get two preorders
$\preceql$ and $\preceqg$ on $M$ and $x\preceqg y$ implies $x\preceql y$. 

Next we introduce the notion of a  future time function $\tim\cl M\to\R$, a variant of the classical notion of a time function. This is a causal submersive and surjective morphism with the property that, for any $x_0\in M$, it is proper on the sets  $\futl{x_0}$ 
and  $\opb{\tim}\tim(x_0)\cap \futl{x_0}=\{x_0\}$. Recall that
 globally hyperbolic manifolds admit time functions (see~\cites{Ge70, MS08, FS11} among others). Also recall that we have used time functions  in~\cite{JS16} to obtain global propagation results for sheaves whose micro-support satisfy suitable conditions.  As a by-product, we globally  solve in loc.\ cit.\ the Cauchy problem for hyperbolic $\shd$-modules with hyperfunctions solution.

Here, we use future time functions to  prove our main result: for a causal manifold $(M,\gamma)$, define  $\Derb_{\gammc}(\cor_M)$ similarly as above
where $M=\BBV$. Assume that $(M,\gamma)$ is endowed with a future time function $\tim$. Then  the   functor 
$\roim{\phig}\cl \Derb_{\gammc}(\cor_M)\to\Derb(\cor_{M_\gamma})$ is an equivalence of triangulated categories with quasi-inverse $\opb{\phig}$.

 Note that there is a vast body of literature on spacetimes and Lorentzian manifolds, particularly concerning the wave equation and its solutions.
Usually, such spacetimes are defined through a quadratic form of signature $(+,-,\cdots,-)$.
Here, following~\cite{JS16}, we prefer a weaker formulation
which only makes use of a cone in the tangent space.

\vspace{3ex}
\noindent
{\bf Acknowledgements} 

\spa 
We  warmly thank St{\'e}phane Guillermou, Yuichi Ike and Beno{\^i}t Jubin  for useful comments. 

\section{Notations and background}\label{sect:not}
Unless otherwise specified, a manifold means a real $C^\infty$-manifold and a morphism of manifolds $f\cl M\to N$ is a map of class $C^\infty$. 

\subsubsection*{Vector bundles}
Let $p\cl E\to M$ be a real (finite-dimensional) vector bundle over $M$.  We shall identify $M$ with the zero-section of $E$ and we denote by $E^*$ the dual vector bundle.
\begin{itemize}
\item 
As usual, one denotes by $a\cl E\to E$ the antipodal map, $(x;\xi)\mapsto(x;-\xi)$. For a subset $A\subset E$, one sets $A^a=a(A)$. 
\item
A subset $\gamma$ of $E$ is \define{conic} (or is a cone) if it is invariant by the action of $\R_{>0}$, that is, $\gamma_x\subset E_x$ is a cone (with vertex at $0$)  for each $x\in M$.
Here, $\gamma_x$ is the restriction of $\gamma$ to the fibre $E_x$.
If $\gamma$ is closed and conic, then its projection by $p$ on $M$ coincides with its intersection with the zero-section of $E$ and we identify the zero-section of $E$ with $M$.
\item
A cone $\gamma$ is proper if $\gamma_x$ contains no line for any $x\in M$.
\item
For a cone $\gamma\subset E$, one defines the polar cone $\gamma^\circ\subset E^*$ by 
\eq\label{eq:polarcone}
&&\gamma_x^\circ=\{\eta\in E_x^*;\langle \eta,\xi\rangle\geq0\mbox{  for all }\xi\in\gamma_x\}.
\eneq
\end{itemize}
One shall be aware that for a cone $\gamma\subset E$, its closure $\ol\gamma$ may be different from the union 
$\bigcup_{x\in M}\ol{\gamma_x}$ and similarly with its interior $\Int(\gamma)$.

\subsubsection*{Cotangent bundle}
Let $M$ be a real  manifold of class $C^\infty$.  We shall use the classical notations below.
\begin{itemize}
\item 
 $\tau_M\cl TM\to M$ is the  tangent bundle, $\pi_M\cl T^*M\to M$ the cotangent bundle,
 \item
 for $N$ a submanifold of $M$, $T_NM$ is the normal bundle, $T^*_NM$ the conormal bundle 
  (in particular,   $T^*_MM$ is the zero-section of $T^*M$), 
\item
 $\Delta_M$, or simply $\Delta$, is the diagonal of $M\times M$,
\item
for two manifolds $M$ and $N$ one denotes by $q_1$ and $q_2$ the first and second projections defined on $M\times N$,
\item
one denotes by $p_1$ and $p_2$ the first and second projections defined on $T^*(M\times N)\simeq T^*M\times T^*N$. One also uses the notation $p_2^a$ for the composition of $p_2$ and the antipodal map on $T^*N$. 
\end{itemize}

To a morphism $f\cl M\to N$ of manifolds, one associates the maps
\eq\label{eq:lagrcorresp}
&&\ba{l}\xymatrix{
TM\ar[rd]_-{\tau}\ar[r]^-{f'}&M\times_NTN\ar[d]^-{\tau}\ar[r]^-{f_\tau}&TN\ar[d]^-{\tau}\\
&M\ar[r]^-f&N,
}\quad\quad
\xymatrix{
T^*M\ar[rd]_-{\pi}&M\times_NT^*N\ar[d]^-{\pi}\ar[l]_-{f_d}\ar[r]^-{f_\pi}&T^*N\ar[d]^-{\pi}\\
&M\ar[r]^-f&N.
}\ea\eneq
We set
\eq\label{eq:Tf}
&&Tf\eqdot f_\tau\circ f'\cl TM\to TN,
\eneq
and call $Tf$ the tangent map to $f$.

One denotes by $\Gamma_f$ the graph of $f$ and  sets
\eq\label{eq:lambdaf1}
&&\Lambda_f\eqdot T^*_{\Gamma_f}(M\times N).
\eneq
Then there is a commutative diagram:
\eq\label{eq:lambdaf2}
&&\ba{l}\xymatrix{
&\Lambda_f\ar[d]^\sim\ar[ld]_-{p_1}\ar[rd]^-{p^a_2}&\\
T^*M&M\times_NT^*N\ar[l]^-{f_d}\ar[r]_-{f_\pi}&T^*N.
}\ea\eneq

\subsubsection*{Sheaves}
We shall mainly follow the notations of~\cite{KS90} and use some of the main results of this book. Here, $\cor$ denotes a commutative unitary ring with finite global dimension.
One denotes by $\Derb(\cor_M)$ the bouneded derived category of sheaves on $\cor$-modules on $M$ and calls an object of this category, a sheaf.
For a sheaf $F$, we shall in particular use its micro-support $\SSi(F)$, a closed $\R^+$-conic subset of $T^*M$.

\section{The linear case}\label{section:affin}

Let $\BBV$ be a finite dimensional real vector space and let $\theta$ be a closed convex proper cone with non-empty interior. We  set $\gamma=\BBV\times\theta$.

Let $V\subset\BBV$ be an open subset of $\BBV$. Recall, after~\cite{KS90}*{\S3.5} with slightly different notations, 
that $V$ is $\gamma$-open if $V=V+\theta$. The family of $\gamma$-open  subsets of $\BBV$  defines the  $\gamma$-topology $\BBV_\gamma$ on 
$\BBV$. One denotes by $\phig\cl\BBV\to \BBV_\gamma$ the continuous map associated with the identity of the set $\BBV$. 

\begin{definition}
Let $\BBV$ and $\gamma$ be as above, let $U$ be open in $\BBV$ and let $x_0\in\partial U$. We say that $U$ is 
$\gamma$-open in a neighborhood of $x_0$ if there exists a $\gamma$-open set $V$ and  an open neighborhood 
$U_0$ of $x_0$ such that $U\cap U_0=V\cap U_0$. 
\end{definition}

\begin{lemma}\label{le:affine1}
Let $\BBV$ and $\gamma$ be as above.
Let $U$ be open in $\BBV$ and let  $x_0\in\partial U$. Assume that for an open   neighborhood $U_0$ of $x_0$, 
 $U_0\times_\BBV\SSi(\cor_U)\subset U_0\times \theta^{\circ a}$. Then there exists an open subset $W$ of $\BBV$  which coincides with $U$ in a neighborhood of $x_0$ and such that $\SSi(\cor_W)\subset\BBV\times \theta^{\circ a}$.
\end{lemma}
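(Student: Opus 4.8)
The plan is to reduce the statement to a purely geometric construction. Recall first the elementary identity $A+\theta=A+\Int(\theta)$, valid for any open set $A$, from which one reads off that a set of the form $A+\theta$ is open, contains $A$, and satisfies $(A+\theta)+\theta=A+\theta$, hence is $\gamma$-open. By the computation of the micro-support of the constant sheaf on a $\gamma$-open set (which in the linear case is classical; see \cite{KS90}*{\S3.5} together with the estimates of \S5.3), any such $W$ automatically satisfies $\SSi(\cor_W)\subset\BBV\times\theta^{\circ a}$. It therefore suffices to produce an open set of the form $W=(U\cap B)+\theta$, with $B$ a small open ball centered at $x_0$, which coincides with $U$ on a smaller neighborhood of $x_0$.

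The heart of the argument is to convert the micro-support hypothesis over $U_0$ into a local invariance of $U$ under translation by the cone. Precisely, I would establish the following propagation statement: if $x\in U$, if $v\in\Int(\theta)$, and if the segment $\{x+tv : 0\le t\le 1\}$ is contained in $U_0$, then the whole segment lies in $U$; in particular $x+v\in U$. The geometric reason is that an exterior conormal $\xi\ne 0$ to $U$ at a boundary point belongs to $\SSi(\cor_U)$, hence to $\theta^{\circ a}$ by hypothesis, so that $\langle\xi,v\rangle<0$ for $v\in\Int(\theta)$; but at a first exit point of the segment the exterior conormal would have to pair non-negatively with the outgoing direction $v$, a contradiction. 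Rigorously, this is a non-characteristic deformation and microlocal Morse argument (\cite{KS90}*{Prop.~2.7.2, Cor.~5.4.19}): choosing $\xi_0\in\Int(\theta^\circ)$ one has $\xi_0\notin\theta^{\circ a}\setminus\{0\}$ since $\theta^\circ\cap(-\theta^\circ)=\{0\}$, so the linear function $\langle\xi_0,\cdot\rangle$ is non-characteristic for $\cor_U$ throughout $U_0$, and the resulting propagation of $\cor_U$ in the $\theta$-directions inside $U_0$ yields the set-theoretic invariance stated above.

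Finally I would fix the radii. Choose $B$ centered at $x_0$ with $B\subset U_0$ and set $W=(U\cap B)+\theta$, which is open, of the required $\gamma$-open form, and contains $U\cap B$. It remains to find a smaller ball $B'$ around $x_0$ with $W\cap B'=U\cap B'$. The inclusion $U\cap B'\subset W$ is clear. For the converse, a point $y\in W\cap B'$ is of the form $y=x+v$ with $x\in U\cap B$ and, by the identity $A+\theta=A+\Int(\theta)$, with $v\in\Int(\theta)$; since $|v|\le|y-x_0|+|x_0-x|$ is small, a routine choice of the radii of $B$ and $B'$ guarantees that the segment $\{x+tv\}$ stays inside $U_0$, and the propagation statement then gives $y\in U$. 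I expect the main obstacle to be precisely this combination: making the passage from the micro-support bound to the set-theoretic flow-invariance rigorous while controlling that the translates used to build $W$ near $x_0$ never leave the region $U_0$ on which the hypothesis is available, since $\theta$ is an unbounded cone and naive translation immediately exits $U_0$.
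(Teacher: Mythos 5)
Your proposal is correct, but it takes a genuinely different route from the paper. The paper never modifies $U$ except by cutting it: it builds a ``lens'' $Z_{\epsilon,\eta}$ (an open ball intersected with a closed half-space, containing $x_0$ in its interior) whose own micro-support lies in $\BBV\times\theta^{\circ a}$, forms $U\cap Z_{\epsilon,\eta}$ --- which agrees with $U$ near $x_0$ by construction and whose micro-support is controlled by the tensor-product estimate \cite{KS90}*{Prop.~5.4.14} --- and then caps the flat face of the lens with a union of translated open cones; all the microlocal input is channelled through formal operations on sheaves. You instead take $W=(U\cap B)+\theta$, which is manifestly $\gamma$-open so that its micro-support bound is free, and spend the effort showing that $W$ agrees with $U$ near $x_0$; this is where the hypothesis enters, through your propagation statement. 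That statement is true, and is in effect a local form of \cite{KS90}*{Prop.~5.2.1, 5.2.3} (the same circle of ideas the paper uses for the global Lemma~\ref{le:affine2}); the cleanest rigorous implementation is not a single linear function but restriction to the line $x+\R v$, which is non-characteristic for $\cor_U$ over $U_0$ because $v\in\Int(\theta)$ pairs negatively with every nonzero element of $\theta^{\circ a}$, followed by the one-dimensional observation that $\cor_{U\cap(x+\R v)}$ with micro-support in $\{\tau\le 0\}$ forbids any connected component of $U\cap(x+\R v)$ from having a right endpoint inside $U_0$. The ``exterior conormal at a first exit point'' picture is only a heuristic (the boundary need not be smooth) and must be replaced by this or by the non-characteristic deformation lemma, as you yourself indicate. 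As for the trade-off: the paper avoids any propagation lemma at the price of checking openness and the micro-support bound for a union of two pieces, while your construction yields a cleaner $W$, isolates the microlocal content in one reusable local statement, and your handling of the radii (the segment $[x,y]$ lies in the ball containing both endpoints, hence in $U_0$) correctly disposes of the unboundedness of $\theta$.
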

\begin{proof}
We may assume that $\BBV$ is the Euclidian space $\R^n$ and $x_0=0$. We denote by 
$B_\epsilon(x)$ the open ball of center $x$ and radius $\epsilon>0$. We may assume that, setting     $\xi_0=(1,0,\dots,0)$,
$\xi_0\in\Int(\theta)$. Also set $y_\epsilon=(\epsilon,0,\dots,0)$ so that $x_0$ belongs to the boundary of 
$B_\epsilon(y_\epsilon)$. 
For $\eta\in\R$, set
\eqn
&&H_\eta=\{x\in\BBV;x_1\leq \eta\},\quad L_\eta=\{x\in\BBV;x_1=\eta\}.
\eneqn
Let $Z_{\epsilon,\eta}$ be the lens
\eqn
&&Z_{\epsilon,\eta}=B_{\epsilon+\eta}(y_\epsilon)\cap H_\eta,
\eneqn
and set 
\eqn
&&U_{\epsilon,\eta}=U\cap Z_{\epsilon,\eta}.
\eneqn
We fix $\epsilon>0$. For $0<\eta<\epsilon$, $x_0\in\Int Z_{\epsilon,\eta}$. Moreover, 
 for  $\eta$ small enough, $\SSi(\cor_{Z_{\epsilon,\eta}})\subset\BBV\times\theta^{\circ a}$.
 Since $U_0\times_\BBV\SSi(\cor_U)\subset U_0\times \theta^{\circ a}$, 
it follows (see~\cite{KS90}*{Prop.~5.4.14}) that for $\eta$ small enough, 
\eqn
&&\SSi(\cor_{U_{\epsilon,\eta}})\subset\BBV\times\theta^{\circ a},
\eneqn
and $U_{\epsilon,\eta}=U$ in a neighborhood of $x_0$. 

Now set
\eqn
&&W_{\epsilon,\eta}=\bigcup_{x\in L_\eta\cap U}x+\Int\, \theta.
\eneqn
Then $W_{\epsilon,\eta}$ is open, $\SSi(\cor_{W_{\epsilon,\eta}})\subset \BBV\times\theta^{\circ a}$ 
and $U_{\epsilon,\eta}\cup W_{\epsilon,\eta}$ satisfies the requested properties.
\end{proof}

\begin{lemma}\label{le:affine2}
Let $\BBV$ and $\gamma$ be as above  and  let $U$ be open in $\BBV$.
 Then $U$ is $\gamma$-open in $\BBV$ if and only if $\SSi(\cor_U)\subset \BBV\times \theta^{\circ a}$.
\end{lemma}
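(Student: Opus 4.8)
The plan is to prove the two implications separately, using throughout the sign convention $\theta^{\circ a}=\{\xi;\ \langle\xi,v\rangle\le 0\ \text{for all }v\in\theta\}$. It is worth recording at the outset that, since $\theta$ is proper with non-empty interior, for $v\in\Int\theta$ one has $\langle\xi,v\rangle<0$ for every nonzero $\xi\in\theta^{\circ a}$; equivalently, the hyperplane $\{\langle\xi,v\rangle=0\}$ meets $\theta^{\circ a}$ only at the origin. This strict positivity in interior directions is exactly what will render the microlocal argument below non-characteristic. For the direct implication, assume $U$ is $\gamma$-open, i.e.\ $U=U+\theta$. Then the Whitney normal cone of $U$ contains $\Int\theta$ at every point of $\partial U$, which is the translation of $\gamma$-openness into normal-cone language. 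Combining this with the general bound relating the micro-support of a constant sheaf on an open set to the (antipodal) polar of its normal cone (see~\cite{KS90}*{\S5.3}), one gets $\SSi(\cor_U)\subseteq N(U)^{\circ a}\subseteq(\Int\theta)^{\circ a}=\theta^{\circ a}$, the middle inclusion holding because polarity reverses inclusions. This settles one direction.

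For the converse, assume $\SSi(\cor_U)\subseteq\BBV\times\theta^{\circ a}$ and argue by contraposition. First I would reduce the goal $U=U+\theta$ to the inclusion $U+\Int\theta\subseteq U$: granting the latter, for $x\in U$ and $w\in\theta$ one writes $x+w=(x-\epsilon v_0)+(w+\epsilon v_0)$ with $v_0\in\Int\theta$ fixed and $\epsilon>0$ small, and since $x-\epsilon v_0\in U$ (as $U$ is open) and $w+\epsilon v_0\in\Int\theta$, this gives $x+w\in U+\Int\theta\subseteq U$. So suppose $U+\Int\theta\not\subseteq U$: there are $x\in U$ and $v\in\Int\theta$ with $x+v\notin U$, and moving along the segment $x+tv$ we find a first exit point $x_0=x+t_0v\in\partial U$ with $x_0-sv\in U$ for all small $s>0$.

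The core is then a non-characteristic restriction to the line $\ell=x_0+\R v$. Because $\theta^{\circ a}$ meets $\{\langle\xi,v\rangle=0\}$ only at $0$, the line $\ell$ is non-characteristic for $\cor_U$, so the restriction estimate of~\cite{KS90}*{\S5.4} applies and $\SSi(\cor_{U\cap\ell})$ is carried by the covectors $\langle\xi,v\rangle\,dt$ with $\xi\in\SSi(\cor_U)$, all of which satisfy $\langle\xi,v\rangle\le 0$. On the other hand, near $x_0$ the one-dimensional set $U\cap\ell$ lies on the $-v$ side and excludes $x_0$, so a direct stalk computation shows that the outward conormal $+dt$ belongs to $\SSi(\cor_{U\cap\ell})$ at $x_0$. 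These two facts contradict each other, proving $U+\Int\theta\subseteq U$ and hence the converse.

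I expect the main obstacle to be making the last step rigorous when $\partial U$ is not smooth: extracting the outward conormal $+dt$ at $x_0$, and ensuring that the segment meets the boundary transversally enough for the restriction estimate, is delicate for a general open set. This is precisely where Lemma~\ref{le:affine1} enters: the global hypothesis restricts to the local hypothesis of that lemma at $x_0$, so one may replace $U$, in a neighborhood of $x_0$, by a set $W$ with globally controlled micro-support $\SSi(\cor_W)\subseteq\BBV\times\theta^{\circ a}$, for which the lens-plus-forward-cone model makes the boundary behaviour along $\ell$ explicit and the stalk computation transparent. The boundary-of-cone directions $v\in\partial\theta$, which are genuinely characteristic and to which the restriction argument does not apply directly, are then handled not microlocally but by the interior-approximation reduction recorded in the second paragraph.
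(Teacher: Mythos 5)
Your proof is correct, and while the forward direction coincides with the paper's (both pass through $\SSi(\cor_U)\subset N(U)^{\circ a}\subset\theta^{\circ a}$), your converse takes a genuinely different route. The paper argues globally and sheaf-theoretically: from $\SSi(\cor_U)\subset\BBV\times\theta^{\circ a}$ it deduces $\cor_U\simeq\opb{\phig}\roim{\phig}\cor_U$ (\cite{KS90}*{Prop.~5.2.3}), then uses the explicit description of sections of $\roim{\phig}$ over sets $V+\theta$ (\cite{KS90}*{Prop.~3.5.3}) to get $\sect(V+\theta;\cor_U)\simeq\sect(V;\cor_U)\neq0$ for $V\subset U$ open convex, and concludes $V+\theta\subset U$ by connectedness of $V+\theta$. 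You instead run a pointwise microlocal argument: reduce $U+\theta\subset U$ to $U+\Int\theta\subset U$, take a first exit point $x_0$ of a ray $x+tv$ with $v\in\Int\theta$, observe that the line $\ell=x_0+\R v$ is non-characteristic for $\BBV\times\theta^{\circ a}$ precisely because $v\in\Int\theta$, and contradict the restriction estimate $\SSi(\cor_{U\cap\ell})\subset\{\tau\le0\}$ with the covector $+dt$ produced at $x_0$ by the one-dimensional stalk computation. Both arguments rest on a substantial input from \cite{KS90} (Prop.~5.2.3 for the paper, Prop.~5.4.13 for you); yours is more self-contained in that it avoids the $\gamma$-topology machinery of \S~3.5 altogether, at the price of the explicit first-exit-point analysis.

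One remark: the obstacle you flag at the end is not actually there, and your proposed remedy would not address it anyway. The non-characteristic inverse image theorem requires no regularity of $\partial U$, only the bound $\SSi(\cor_U)\subset\BBV\times\theta^{\circ a}$, which you have globally by hypothesis; and the computation that $+dt$ lies in $\SSi(\cor_{U\cap\ell})$ at $x_0$ needs only that $U\cap\ell$ contains a one-sided interval $(-\epsilon,0)$ and omits $0$ (a section equal to $1$ on that interval cannot have support closed in a neighborhood of $0$), regardless of what $U\cap\ell$ does on the other side. So Lemma~\ref{le:affine1} is not needed here --- and indeed it globalizes a local micro-support bound rather than regularizing the boundary, so it would not deliver the ``explicit boundary behaviour'' you ask of it. Your argument closes without it.
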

\begin{proof}
(i) Assume that $U$ is  $\gamma$-open.  Then 
$N(U)^\circ\subset \BBV\times\theta^\circ$ by~\cite{KS90}*{Prop.~5.3.7~(ii)} and thus $\SSi(\cor_U)\subset \BBV\times\theta^{\circ a}$ 
by~\cite{KS90}*{Prop.~5.3.8}.

\spa
(ii) Assume that   $\SSi(\cor_U)\subset \BBV\times\theta^{\circ a}$. 
It follows from~\cite{KS90}*{Prop.~5.2.3} that $\cor_U\simeq\opb{\phig}\roim{\phig}\cor_U$. 
By  Proposition 3.5.3~(i)  of loc.\ cit.,
for  $V$ open and convex,  we have the isomorphism
$\sect((V+\theta);\roim{\phig}\cor_U)\simeq\sect(V;\opb{\phig}\roim{\phig}\cor_U)$. Therefore, 
$\sect((V+\theta);\cor_U)\simeq\sect(V;\cor_U)$, hence 
$\sect((V+\theta);\cor_U)\neq0$ for any open set $V\subset U$. This last condition  implies $(V+\theta)\subset U$ hence $U=U+\theta$.  
\end{proof}

Let $\BBV$ and $\gamma$ be as above.
\begin{lemma}\label{le:affine3}
Let $U\subset \BBV$ be an open subset and let $x_0\in \partial U$. Then $U$ is $\gamma$-open in a neighborhood of $x_0$ if and only if there exists an open neighborhood $W$ of $x_0$ such that $W\times_\BBV\SSi(\cor_U)\subset W\times \theta^{\circ a}$.
\end{lemma}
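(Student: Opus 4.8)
Looking at Lemma~\ref{le:affine3}, I need to prove an equivalence relating local $\gamma$-openness at a boundary point $x_0$ to a microlocal condition on $\SSi(\cor_U)$ over a neighborhood of $x_0$.

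\textbf{The plan.} The statement is a two-way implication, and I would prove each direction using the two preceding lemmas as the main tools, essentially reducing the local statement to the global (or semi-global) statements already established.

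\textbf{The forward direction} ($\gamma$-open near $x_0$ $\Rightarrow$ microlocal bound over a neighborhood). Suppose $U$ is $\gamma$-open in a neighborhood of $x_0$, so by definition there is a $\gamma$-open set $V$ and an open neighborhood $U_0$ of $x_0$ with $U\cap U_0=V\cap U_0$. First I would apply Lemma~\ref{le:affine2} to the genuinely $\gamma$-open set $V$, obtaining $\SSi(\cor_V)\subset\BBV\times\theta^{\circ a}$. The key point is that the micro-support is a local invariant: since $U$ and $V$ agree on the open set $U_0$, the constant sheaves $\cor_U$ and $\cor_V$ are isomorphic after restriction to $U_0$, so $U_0\times_\BBV\SSi(\cor_U)=U_0\times_\BBV\SSi(\cor_V)$. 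Combining these, I get $U_0\times_\BBV\SSi(\cor_U)\subset U_0\times\theta^{\circ a}$, which is exactly the desired conclusion with $W=U_0$.

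\textbf{The converse direction} (microlocal bound over a neighborhood $\Rightarrow$ $\gamma$-open near $x_0$). This is where Lemma~\ref{le:affine1} does the work. Assume there is an open neighborhood $W$ of $x_0$ with $W\times_\BBV\SSi(\cor_U)\subset W\times\theta^{\circ a}$. This is precisely the hypothesis of Lemma~\ref{le:affine1} (with $U_0=W$), so that lemma produces an open set $W'$ which coincides with $U$ in a neighborhood of $x_0$ and satisfies $\SSi(\cor_{W'})\subset\BBV\times\theta^{\circ a}$. Applying Lemma~\ref{le:affine2} to $W'$, I conclude that $W'$ is $\gamma$-open. Since $W'$ agrees with $U$ near $x_0$, the set $W'$ together with the neighborhood on which it coincides with $U$ witnesses that $U$ is $\gamma$-open in a neighborhood of $x_0$, completing the proof.

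\textbf{Where the difficulty lies.} The genuine mathematical content is entirely packaged inside Lemma~\ref{le:affine1}, whose construction of the lens $Z_{\epsilon,\eta}$ and the augmenting set $W_{\epsilon,\eta}$ handles the delicate geometry of turning a local microlocal bound into a globally well-behaved $\gamma$-open model. For Lemma~\ref{le:affine3} itself the only subtlety I anticipate is bookkeeping with the word ``neighborhood'': I must be careful that the set produced by Lemma~\ref{le:affine1} agrees with $U$ on a possibly smaller neighborhood than $W$, and that $\gamma$-openness in a neighborhood only requires agreement with \emph{some} $\gamma$-open set on \emph{some} neighborhood, so the shrinking of neighborhoods is harmless. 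Both directions are thus short once Lemmas~\ref{le:affine1} and~\ref{le:affine2} are invoked as black boxes.
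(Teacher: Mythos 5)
Your proof is correct and follows essentially the same route as the paper: the paper's (very terse) argument also uses Lemma~\ref{le:affine1} to replace $U$ by a global model and then invokes Lemma~\ref{le:affine2}, with the forward direction resting on the locality of the micro-support exactly as you describe. Your write-up simply makes explicit the reduction that the paper compresses into one sentence.
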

\begin{proof}
Since both properties are local on the boundary of $U$ in $W$, we may assume by Lemma~\ref{le:affine1} that $W=\BBV$. 
Then apply Lemma~\ref{le:affine2}.
\end{proof}

\begin{definition}
Let $U$ be a convex  open subset of $\BBV$. One denotes by $\Ug$ the set $U$ endowed with the topology induced by $\BBV_\gamma$ and  by $\phiug\cl U\to \Ug$ the continuous map associated with the identity map of $U$. 
\end{definition}
 Note that an open subset $V$ of $U$ is open in $\Ug$ if and only if $V=(V+\theta)\cap U$. Indeed, if $W$ is open and $V=(W+\theta)\cap U$, then $V\subset (V+\theta)\cap U\subset (W+\theta)\cap U$.

One has a commutative diagram of topological spaces
\eq\label{diag:UVG}
&&\ba{l}\xymatrix{
\BBV\ar[r]^-{\phig}&\BBV_\gamma\\
U\ar[u]^-{i_U}\ar[r]_-{\phiug}&\Ug\ar[u]^-{i_\gamma}\, .
}\ea\eneq
One shall be aware that $U$ and $\BBV$ are locally compact Hausdorff  spaces contrarily to $\Ug$ and $\BBV_\gamma$ which are not Hausdorff and the classical results on proper direct images for sheaves  do not apply to these spaces. 

\begin{lemma}\label{le:opbphig}
Let  $U$ be  a convex open subset of $\BBV$. The functor  $\opb{\phiug}\cl \Derb(\cor_{\Ug})\to\Derb(\cor_U)$ takes its values in 
$\Derb_{U\times\theta^{\circ a}}(\cor_U)$.
\end{lemma}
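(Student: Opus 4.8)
The plan is to reduce the statement to the case of constant sheaves on open subsets of $\Ug$, where it follows from Lemma~\ref{le:affine2}, and then to propagate the micro-support bound to an arbitrary object. Since $\opb{\phiug}$ is exact, it commutes with the formation of cohomology sheaves and of mapping cones, so the full subcategory $\mathcal N=\{F\in\Derb(\cor_U)\;;\;\SSi(F)\subset U\times\theta^{\circ a}\}$, which is triangulated by the triangle inequality for micro-supports (\cite{KS90}), is the natural target to hit. Concretely, for $G\in\Derb(\cor_{\Ug})$ the cohomology sheaves of $\opb{\phiug}G$ are the $\opb{\phiug}H^j(G)$, and a bounded complex whose cohomology sheaves lie in $\mathcal N$ lies itself in $\mathcal N$; hence it suffices to treat a single sheaf $\mathcal H$ on $\Ug$. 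For such an $\mathcal H$ one resolves it by direct sums of constant sheaves $\cor_V$ with $V$ open in $\Ug$, so that $\opb{\phiug}\mathcal H$ is a homotopy colimit of complexes built from the $\opb{\phiug}\cor_V$; since $U\times\theta^{\circ a}$ is closed and $\SSi$ of a small direct sum (resp. of a filtered colimit) is contained in the closure of the union of the micro-supports, the bound for the $\cor_V$ passes to $\opb{\phiug}\mathcal H$.

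The heart of the argument is therefore the computation for a single $V$ open in $\Ug$. First, because $\phiug$ is continuous and $V$, being $\gamma$-open, is open in $U$ for the usual topology, one has $\opb{\phiug}\cor_V\simeq\cor_V$ as a sheaf on $U$. Next, recall (from the remark following diagram~\eqref{diag:UVG}) that $V=(V+\theta)\cap U$; the set $V+\theta$ is open and satisfies $(V+\theta)+\theta=V+\theta$, hence is $\gamma$-open in $\BBV$, so Lemma~\ref{le:affine2} gives $\SSi(\cor_{V+\theta})\subset\BBV\times\theta^{\circ a}$. Since $\cor_V\simeq\opb{i_U}\cor_{V+\theta}$ is the restriction to the open subset $U$, locality of the micro-support yields $\SSi(\cor_V)=\SSi(\cor_{V+\theta})\cap T^*U\subset(\BBV\times\theta^{\circ a})\cap T^*U=U\times\theta^{\circ a}$, as desired.

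The main obstacle is not the geometric input, which is immediate once $V+\theta$ is recognized as $\gamma$-open, but the reduction in the first paragraph: $\Ug$ is not Hausdorff and $\Derb(\cor_{\Ug})$ is not finitely generated by the $\cor_V$, so one cannot avoid an a priori infinite resolution, and the micro-support estimate must be shown stable under the corresponding infinite direct sums and homotopy colimits. This is exactly the point where the closedness of $U\times\theta^{\circ a}$ is used, through the inclusion of $\SSi$ of a filtered colimit in the closure of the union; I would make sure to cite the relevant stability properties of $\SSi$ from \cite{KS90} rather than reprove them, so that the only genuinely new ingredient remains the identification $V=(V+\theta)\cap U$ together with Lemma~\ref{le:affine2}.
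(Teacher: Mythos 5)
Your computation for the generators is correct and is the right geometric input: for $V$ open in $\Ug$ one has $V=(V+\theta)\cap U$, the set $V+\theta$ is open with $(V+\theta)+\theta=V+\theta$, hence $\gamma$-open in $\BBV$, so Lemma~\ref{le:affine2} gives $\SSi(\cor_{V+\theta})\subset\BBV\times\theta^{\circ a}$, and restriction to the open subset $U$ gives $\SSi(\opb{\phiug}\cor_V)\subset U\times\theta^{\circ a}$. The gap is in the reduction from an arbitrary object of $\Derb(\cor_{\Ug})$ to these generators. Passing to a single cohomology sheaf $\mathcal H$ is fine (bounded complex, truncation triangles), but the resolution $\cdots\to L^{-1}\to L^{0}\to\mathcal H\to 0$ by direct sums of $\cor_V$'s is unbounded to the left, and micro-support estimates do not pass to cokernels: from $0\to\opb{\phiug}K\to\opb{\phiug}L^{0}\to\opb{\phiug}\mathcal H\to 0$ the triangle inequality only yields $\SSi(\opb{\phiug}\mathcal H)\subset\SSi(\opb{\phiug}K)\cup\SSi(\opb{\phiug}L^{0})$ with the syzygy $K$ uncontrolled, and iterating is an infinite regress. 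The homotopy-colimit escape you invoke --- writing $\opb{\phiug}\mathcal H$ as a homotopy colimit of the stupid truncations $\sigma^{\geq-n}\opb{\phiug}L^{\bullet}$, each bounded and termwise controlled --- forces you out of $\Derb$ into unbounded complexes and countable direct sums shifted arbitrarily far to the left. The stability statements you propose to ``cite from \cite{KS90}'' do not exist in that generality: the result on colimits used elsewhere in this paper (\cite{KS90}*{Exe.~5.7}) concerns filtered colimits of \emph{sheaves}, not homotopy colimits of unbounded complexes, and $\SSi$ is only developed in loc.\ cit.\ for the bounded derived category. Making this step rigorous is genuinely new work, not a citation, so as written the proof is incomplete.

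For comparison, the paper's proof sidesteps the whole issue with a one-line diagram chase in \eqref{diag:UVG}: for $G\in\Derb(\cor_{\Ug})$ one has $\opb{\phiug}G\simeq\opb{\phiug}\opb{i_\gamma}\roim{i_\gamma}G\simeq\opb{i_U}\opb{\phig}(\roim{i_\gamma}G)$, i.e.\ one first extends $G$ to all of $\BBV_\gamma$ by $\roim{i_\gamma}$, where \cite{KS90}*{Prop.~5.2.3} already gives $\SSi(\opb{\phig}H)\subset\BBV\times\theta^{\circ a}$ for \emph{every} $H\in\Derb(\cor_{\BBV_\gamma})$, and then restricts along the open inclusion $i_U$. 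You should adopt that reduction; your analysis of the $\cor_V$ then becomes a correct but unnecessary special case.
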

\begin{proof}
Let $G\in\Derb(\cor_{\Ug})$. Consider Diagram~\eqref{diag:UVG}. Then 
\eqn
 \opb{\phiug}G&\simeq&  \opb{\phiug}\opb{i_\gamma}\roim{i_\gamma}G\simeq\opb{i_U}\opb{\phig}(\roim{i_\gamma}G).
 \eneqn
 Since $\SSi(\opb{\phig} (\roim{i_\gamma}G))\subset \BBV\times \gamma^{\circ a}$ by~\cite{KS90}*{Prop.~5.2.3}, we get the result.
 \end{proof}

\begin{proposition}\label{pro:eqv1}
Let  $U$ be  a convex open subset of $\BBV$.
The  functor $\roim{\phiug}\cl \Derb_{U\times\theta^{\circ a}}(\cor_U)\to\Derb(\cor_{\Ug})$ is an equivalence of triangulated categories with quasi-inverse $\opb{\phiug}$.
\end{proposition}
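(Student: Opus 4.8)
The plan is to show that the unit $\mathrm{id}\to\roim{\phiug}\opb{\phiug}$ and the counit $\opb{\phiug}\roim{\phiug}\to\mathrm{id}$ of the adjunction $(\opb{\phiug},\roim{\phiug})$ are isomorphisms, the unit on all of $\Derb(\cor_{\Ug})$ and the counit on the subcategory $\Derb_{U\times\theta^{\circ a}}(\cor_U)$. Since $\opb{\phiug}$ already takes its values in $\Derb_{U\times\theta^{\circ a}}(\cor_U)$ by Lemma~\ref{le:opbphig}, these two isomorphisms together assert exactly that $\roim{\phiug}$ and $\opb{\phiug}$ are mutually quasi-inverse equivalences between $\Derb_{U\times\theta^{\circ a}}(\cor_U)$ and $\Derb(\cor_{\Ug})$.

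Both isomorphisms I would reduce to the linear case ($U=\BBV$) of~\cite{KS90} by means of Diagram~\eqref{diag:UVG}. Here I use that $i_U$ is an open embedding and $i_\gamma$ a topological embedding, so that $\opb{i_U}\roim{i_U}\simeq\mathrm{id}$ and $\opb{i_\gamma}\roim{i_\gamma}\simeq\mathrm{id}$ (the latter already used in the proof of Lemma~\ref{le:opbphig}), together with the commutation $\phig\circ i_U=i_\gamma\circ\phiug$ read off the square. For the counit, combining these gives, for $F\in\Derb_{U\times\theta^{\circ a}}(\cor_U)$, the identification $\opb{\phiug}\roim{\phiug}F\simeq\opb{i_U}\opb{\phig}\roim{\phig}\roim{i_U}F$, so that the counit over $U$ is the restriction to $U$ of the linear counit applied to the extension $\roim{i_U}F$.

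To compute the counit stalkwise I observe that the $\gamma$-open neighborhoods of a point $x\in U$ in $\Ug$ admit the cofinal family $(B_\epsilon(x)+\Int\theta)\cap U$, so that the stalk of $\opb{\phiug}\roim{\phiug}F$ at $x$ is the colimit over $\epsilon$ of $\sect\bigl((B_\epsilon(x)+\Int\theta)\cap U;F\bigr)$, mapping to $F_x$ by restriction. Since $U$ is convex, each such set is a convex open subset of $U$ and the intersection $(x+\theta)\cap U$ is convex; as $\SSi(F)\subset U\times\theta^{\circ a}$ holds on the whole of $U$, the microlocal cut-off and propagation results of~\cite{KS90}*{\S5.2} (in particular the argument of Prop.~5.2.3) give $\sect\bigl((x+\theta)\cap U;F\bigr)\simeq F_x$ and identify the colimit with this value. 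Hence the counit is an isomorphism; the essential point is that convexity keeps the relevant propagation region inside $U$, where the hypothesis on $\SSi(F)$ is available.

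For the unit, writing $\widetilde G=\roim{i_\gamma}G$ and $H=\opb{\phig}\widetilde G$ one has $\opb{\phiug}G\simeq\opb{i_U}H$, and, by the linear unit isomorphism $\roim{\phig}\opb{\phig}\simeq\mathrm{id}$ of~\cite{KS90}, $G\simeq\opb{i_\gamma}\roim{\phig}H$. The unit morphism then becomes the base change comparison $\opb{i_\gamma}\roim{\phig}H\to\roim{\phiug}\opb{i_U}H$ for the Cartesian square~\eqref{diag:UVG}. This is where I expect the main difficulty to lie: the maps involved are not proper and, as emphasized after Diagram~\eqref{diag:UVG}, the classical proper base change theorems do not apply, while $\roim{i_U}H$ carries microsupport on $\partial U$ outside $\BBV\times\theta^{\circ a}$, so the global linear equivalence cannot be invoked verbatim. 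I would settle the comparison locally along $\partial U$: the statement is local on the boundary, and near each boundary point Lemma~\ref{le:affine1} lets one replace $U$ by an open set agreeing with $U$ near that point and whose constant sheaf has microsupport globally contained in $\BBV\times\theta^{\circ a}$, thereby reducing, via Lemma~\ref{le:affine3}, the boundary contributions of the colimit defining $\roim{\phiug}$ to a propagation statement that holds by~\cite{KS90}*{\S5.2}. Combining the two isomorphisms completes the proof, the boundary analysis for the unit being the step I expect to be the main obstacle, precisely because neither properness nor the global microsupport bound is available there.
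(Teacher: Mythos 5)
Your overall strategy --- prove that the unit $\id\to\roim{\phiug}\opb{\phiug}$ is an isomorphism on $\Derb(\cor_{\Ug})$ and that the counit is an isomorphism on $\Derb_{U\times\theta^{\circ a}}(\cor_U)$ --- is exactly what the paper's proof amounts to, since the paper simply invokes \cite{KS90}*{Prop.~3.5.3} (for the unit) and \cite{KS90}*{Prop.~5.2.3} (for the counit) and asserts that their proofs extend verbatim from $\BBV$ to a convex open subset. Your treatment of the counit is in this spirit and is essentially sound: you rerun the propagation argument of \cite{KS90}*{\S5.2} intrinsically on $U$, using convexity to keep the sets $(B_\epsilon(x)+\Int\theta)\cap U$, and the deformations between them, inside $U$, where the micro-support hypothesis is available.

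The genuine gap is in the unit. You reduce it to the comparison morphism $\opb{i_\gamma}\roim{\phig}H\to\roim{\phiug}\opb{i_U}H$, you correctly observe that no general base change theorem applies (the maps are not proper, $i_\gamma$ is not an open inclusion since $U$ need not be $\gamma$-open in $\BBV$, and $\roim{i_U}H$, resp.\ $\roim{i_\gamma}G$, behaves badly along $\partial U$) --- and then you do not settle it. The proposed repair via Lemmas~\ref{le:affine1} and~\ref{le:affine3} cannot work as stated: those lemmas concern the micro-support of the constant sheaf of an open subset of $\BBV$, and give no handle on an arbitrary $G\in\Derb(\cor_{\Ug})$ nor on the sections of $\roim{i_\gamma}G$ over $\gamma$-open sets meeting $\partial U$. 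The way out is not to extend to $\BBV$ and restrict back, but to argue intrinsically on $U$, which is what the paper means by ``the proofs extend verbatim'': the relative form of \cite{KS90}*{Prop.~3.5.3~(i)} states that for $V$ open and convex in $U$ one has $\rsect((V+\theta)\cap U;G)\simeq\rsect(V;\opb{\phiug}G)$, because $(V+\theta)\cap U$ is the smallest open subset of $\Ug$ containing $V$ and the cofinality argument of loc.\ cit.\ uses only the convexity of $V$ and of $U$; the unit isomorphism over an arbitrary open $W$ of $\Ug$ then follows by covering $W$ by such $V$ with $(V+\theta)\cap U\subset W$. This route never leaves $U$ and so avoids the boundary difficulty that blocks your argument.
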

When $U=\BBV$,  this result is an immediate consequence of~\cite{KS90}*{Prop.~3.5.3, Prop.~5.2.3}. A careful reading of these propositions shows that the proofs
extend verbatim to the case where $\BBV$ is replaced with a convex open subset. 

Note that we shall not use Proposition~\ref{pro:eqv1} and that Theorem~\ref{th:main} will generalize it. 

\section{Topologies on causal manifolds}

\subsubsection*{Causal manifolds}

Consider first a real manifold $M$ (of class $C^\infty$) endowed with a closed convex proper cone $\gamma\subset TM$  containing the zero-section. 

\begin{definition}\label{def:causal}
\banum
\item
A causal manifold $(M,\gamma)$ is a manifold $M$ equipped with a closed convex proper cone  $\gamma\subset TM$ 
with the property   that the projection $\tau_M\cl\Int(\gamma)\to M$ is surjective  and $\Int(\gamma)=\bigcup_{x\in M}\Int(\gamma_x)$. In other words, 
 $(\Int(\gamma))_x=\Int(\gamma_x)$ and this set is non-empty for all $x\in M$. 
\item
To a causal manifold $(M,\gamma)$ one associates the cone $\lambda=\gammc\subset T^*M$. 
\item
A morphism of causal manifolds $f\cl(M,\gamma_M)\to(N,\gamma_N)$ is a morphism of manifolds 
$f\cl M\to N$ such that the map $Tf\cl TM\to TN$ satisfies  $Tf({\gamma_M})\subset{\gamma_N}$. 
Equivalently, a morphism of causal manifolds $f\cl(M,\lambda_M)\to(N,\lambda_N)$ is a morphism of manifolds 
$f\cl M\to N$ such that $f_d\opb{f_\pi}\lambda_N\subset\lambda_M$.  (The equivalence  is proved in~~\cite{JS16}*{Prop.~1.12}.)
\item
For $I$ an open interval of $\R$, we denote by $(I,+)$ the causal manifold $(I,\gamma)$ where  $\gamma=\{(t;v)\in TI; v\geq0\}$. 
\eanum
\end{definition}
One shall be aware that in~\cite{JS16}*{Def~1.13} the  cone $\gamma$ was open. 


Also note that the cone $\lambda$ satisfies
\eq\label{eq:good}
&&\lambda\cap\lambda^a= T^*_MM, \quad \lambda+\lambda=\lambda.
\eneq

\begin{definition}\label{def:ctcone}
(i) A constant cone contained in $\gamma$ is a triple $(\phi,U, \theta)$ where $U$ is open in $M$, $\phi \cl U \to \R^d$ is a chart, $\theta \subset \R^d$ is a closed  convex proper cone with non-empty interior   such that 
in this chart $U \times \theta\subset \gamma$ (that is, $\phi(U)\times \theta \subset T\phi(U\times_M\gamma)$).
A constant cone $(\phi,U, \theta)$ will often simply be denoted by $U \times \theta$.

\spa
(ii) A \define{basis of constant cones} for $\gamma$ is a family of constant cones  $\{U_i \times \theta_i\}_i$ contained in $\gamma$ such that  $\bigcup_iU_i=M$ and for any $x_0\in M$, $\Int(\gamma_{x_0})=\bigcup_{x_0\in U_i}\theta_i$.
\end{definition}

\begin{remark}\label{rem:constcone}
For a constant cone $(\phi, U,\theta)$, we shall often identify $U$ with its image by $\phi$ in $\R^d$.
 \end{remark}

\begin{lemma}
Let  $(M,\gamma)$ be a manifold $M$ equipped with a closed convex proper cone  $\gamma\subset TM$. Then 
$(M,\gamma)$ is a causal manifold if and only if there exists a basis of constant cones.
\end{lemma}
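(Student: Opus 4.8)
The plan is to prove the two implications separately, reducing everything to a fixed trivializing chart so that $TM$ looks locally like $U\times\R^d$ and each fibre $\gamma_x$ is a closed convex proper cone in $\R^d$. Throughout I use the elementary fact that for an open $U\subset M$ and a closed convex proper cone $\theta$ with nonempty interior, the subset of $TM$ corresponding to $U\times\theta$ in a chart has interior $U\times\Int(\theta)$; in particular $U\times\theta\subset\gamma$ forces $U\times\Int(\theta)\subset\Int(\gamma)$, so every constant cone carves out an explicit open piece of $\Int(\gamma)$. I read the defining equality of a basis as the statement that the cones $\theta_i$ exhaust the open cone $\Int(\gamma_{x_0})$ up to the origin, which is the natural content of Definition~\ref{def:ctcone}(ii).

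For the implication ``causal $\Rightarrow$ basis'', fix $x_0\in M$ and $\xi_0\in\Int(\gamma_{x_0})=(\Int\gamma)_{x_0}$. Since $\Int(\gamma)$ is open in $TM$, I pick a chart $\phi$ near $x_0$ and a product neighbourhood $B(\phi(x_0),\rho)\times B(\xi_0,\delta)\subset\Int(\gamma)$ with $\delta$ small enough that $\ol{B(\xi_0,\delta)}\subset\Int(\gamma_{x_0})$. As $\Int(\gamma_{x_0})$ is an open cone and each fibre $\gamma_x$ is a cone, the closed convex proper cone $\theta$ generated by $\ol{B(\xi_0,\delta/2)}$ then satisfies $\theta\subset\gamma_x$ for every $x$ in a small neighbourhood $U'$ of $x_0$, has nonempty interior, contains $\xi_0$ in its interior, and obeys $\theta\setminus\{0\}\subset\Int(\gamma_{x_0})$. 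The triple $(\phi,U',\theta)$ is a constant cone contained in $\gamma$. Ranging over all $x_0$ and all $\xi_0$, the resulting family covers $M$ and, for each $x_0$, its fibres exhaust $\Int(\gamma_{x_0})$; hence it is a basis of constant cones.

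For the converse, assume a basis $\{U_i\times\theta_i\}_i$ is given, and work in a fixed chart around $x_0$ in which the fibre over $x_0$ of the $i$-th constant cone is a closed convex proper cone, still written $\theta_i$, with $\Int(\gamma_{x_0})=\bigcup_{x_0\in U_i}\theta_i$. Nonemptiness of $\Int(\gamma_{x_0})$ is immediate, since any $U_i\ni x_0$ supplies $\Int(\theta_i)\subset(\Int\gamma)_{x_0}$. The real content is the equality $(\Int\gamma)_{x_0}=\Int(\gamma_{x_0})$; as the inclusion $\subset$ always holds, I must prove $\supset$. Fix $\xi_0\in\Int(\gamma_{x_0})$. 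Passing to a countable subfamily and using that the boundary of a convex set has empty interior, one sees that $\bigcup_i\Int(\theta_i)$ is dense in $\Int(\gamma_{x_0})$; combined with convexity this lets me choose finitely many directions $\eta_1,\dots,\eta_k$, each lying in the interior of some fibre $\theta_{i_j}$, with $\xi_0\in\Int(\mathrm{conv}\{\eta_1,\dots,\eta_k\})$. Each $(x_0,\eta_j)$ lies in the open set $\Int(U_{i_j}\times\theta_{i_j})\subset\Int(\gamma)$, so there is a common neighbourhood $O$ of $x_0$ and a radius $\delta>0$ with $B(\eta_j,\delta)\subset\gamma_x$ for all $j$ and all $x\in O$. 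Fibrewise convexity of $\gamma$ then forces $\gamma_x$ to contain the convex hull of these balls, hence a fixed ball $B(\xi_0,\delta')$, for every $x\in O$. Thus $O\times B(\xi_0,\delta')\subset\gamma$ is an open neighbourhood of $(x_0,\xi_0)$ in $TM$, proving $\xi_0\in(\Int\gamma)_{x_0}$.

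The main obstacle is precisely this last equality: the cones $\theta_i$ are closed and a priori only cover $\Int(\gamma_{x_0})$, so a given interior direction $\xi_0$ may lie on the boundary of \emph{every} $\theta_i$ containing it, and one cannot expect a single constant cone to exhibit $(x_0,\xi_0)$ as an interior point. The remedy is to surround $\xi_0$ by several directions each interior to some $\theta_{i_j}$ and let the fibrewise convexity of $\gamma$ propagate a genuine open neighbourhood to the nearby fibres; the density remark is what makes such surrounding directions available. All other steps are routine once the problem is localized in a chart.
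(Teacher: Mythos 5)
Your proof is correct, and in the direction ``basis $\Rightarrow$ causal'' it is genuinely more careful than the paper's. The paper disposes of that implication by asserting $\Int(\gamma)=\bigcup_i U_i\times\Int(\theta_i)$ and reading off the fibres, but this equality is not justified there and, as you rightly observe, a direction $\xi_0\in\Int(\gamma_{x_0})$ may lie on the boundary of every $\theta_i$ containing it (already in the plane one can cover an open angular sector by closed subsectors all sharing a common boundary ray, so $\bigcup_i\Int(\theta_i)$ can be strictly smaller than $\bigcup_i\theta_i$). Your remedy --- surround $\xi_0$ by finitely many directions $\eta_j$ interior to various $\theta_{i_j}$, pass to a common neighbourhood $O$ of $x_0$ on which each $B(\eta_j,\delta)\subset\gamma_x$, and let fibrewise convexity of $\gamma$ produce a uniform ball around $\xi_0$ --- supplies exactly the missing step; convexity of the fibres is indeed what makes the claim true, and the paper's proof tacitly relies on it. Two small remarks: the appeal to countable subfamilies and Baire category is superfluous, since each $\theta_i$ is a closed convex set with non-empty interior and hence equals $\ol{\Int(\theta_i)}$, so $\bigcup_i\Int(\theta_i)$ is automatically dense in $\bigcup_i\theta_i$; and your reading of Definition~\ref{def:ctcone}~(ii) ``up to the origin'' is the only tenable one, since $0\in\theta_i$ while $0\notin\Int(\gamma_{x_0})$ for a proper cone. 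The other direction (causal $\Rightarrow$ basis) coincides with the paper's argument: shrink to a product neighbourhood of $(x_0,\xi_0)$ inside the open set $\Int(\gamma)$ and take the closed cone generated by a slightly smaller ball.
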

\begin{proof}
(i) Assume that there exists a basis of constant cones $\{U_i \times \theta_i\}_i$. Then  $\Int(\gamma)=\bigcup_iU\times\Int(\theta_i)$. Therefore, 
$(\Int(\gamma))_{x_0}=\bigcup_{x_0\in U_i}\theta_i  \neq\varnothing$ since $\theta_i$ has non-empty interior and $\bigcup_iU_i=M$. Moreover,
$\Int(\gamma_{x_0})=\bigcup_{x_0\in U_i}\theta_i$ by the definition of a basis of constant cones.

\spa
(ii) Conversely, assume that $\Int(\gamma_{x_0})\neq\varnothing$ for all $x_0\in M$ and $\Int(\gamma)=\bigcup_{x\in M}\Int(\gamma_x)$. We may assume that $M$ is open in $\R^d$. Then for any open convex cone $\theta$ with $\ol\theta\subset \Int(\gamma_{x_0})$, there exists an open neighborhood $U$ of $x_0$ such that $U\times\theta\subset\Int(\gamma)$. 
\end{proof}

\subsubsection*{$\gamma$-topology}

Let $(M,\gamma)$ be a causal manifold and recall that we have set $\lambda=\gammc$. 

For two  subsets $A,B\subset M$ and $x\in M$, the normal cone  $C_x(A,B)\subset T_xX$ is defined in~\cite{KS90}*{Def~4.1.1}. In a local coordinate system
\eqn
(x_0;v_0)\in C_{x_0}(A,B)&\Leftrightarrow& \mbox{there exists a sequence }\{(x_n,y_n,c_n)\}_n\subset A\times B\times\R^+\\
&&\mbox{ with }c_n(x_n-y_n)\to[n]v_0, x_n\to[n]x_0, y_n\to[n]x_0.
\eneqn
The strict  normal cone $N(A)\subset TM$ in defined in~\cite{KS90}*{Def~5.3.6} by
\eqn
&&N_x(A)=T_xX\setminus C_x(M\setminus A,A),\\
&&N(A)=\bigcup_{x\in M}N_x(A).
\eneqn
The cone $N(A)$ is open and convex. 

\begin{definition}[\cite{JS16}*{Def~1.18}]~\label{def:gammaset}
Let $(M,\gamma)$ be a causal manifold.
\banum
\item
A subset $A\subset M$ is a $\gamma$-set if  $\Int\gamma \subset N(A)$.
\item
A subset $U\subset M$ is  $\gamma$-open if it is both  an open subset and a $\gamma$-set. 
\eanum
\end{definition}
It follows from~\cite{KS90}*{(5.3.3)} that 
\eq\label{eq:strictNC3}
&&\mbox{$A$ is a $\gamma$-set}\Leftrightarrow\left\{\parbox{50ex}{there exists a basis of constant cones
  $\{U_i\times\theta_i\}_i$ for  $\gamma$ such that $U_i\cap(U_i\cap A+\Int\theta_i)\subset A$ for all $i$.
}\right.
\eneq
Remark that the property of being $\gamma$-open is a local property on the boundary $\partial U$.

One needs to check that this definition is in accordance with the previous definition of~\S~\ref{section:affin}.
\begin{lemma}\label{lem:affin0}
Assume that $M=\BBV$ is a finite dimensional vector space and $\gamma=\BBV\times\theta$ for a closed convex proper cone $\theta$ with non-empty interior. Then an open subset $U\subset\BBV$ is $\gamma$-open
 in the sense of Definition~\ref{def:gammaset} if and only if $U=U+\theta$. 
\end{lemma}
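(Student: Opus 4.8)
The plan is to reduce both conditions to the micro-support estimate $\SSi(\cor_U)\subset\BBV\times\theta^{\circ a}$, whose equivalence with $U=U+\theta$ is already available from Lemma~\ref{le:affine2}. Since $\gamma=\BBV\times\theta$ is a constant cone, $\Int\gamma=\BBV\times\Int\theta$, so the condition of Definition~\ref{def:gammaset} — that $U$ be open and a $\gamma$-set — reads here as $\Int\theta\subset N_x(U)$ for every $x\in\BBV$. This is a constraint only at boundary points, since $N_x(U)=T_x\BBV$ for $x\notin\partial U$. As $U$ is open on both sides of the asserted equivalence, only the $\gamma$-set condition is really at stake.

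First I would treat the implication ``$U$ is a $\gamma$-set $\Rightarrow U=U+\theta$''. Assume $\Int\theta\subset N_x(U)$ for all $x$. Taking polar cones fibrewise reverses the inclusion, and since $\theta=\ol{\Int\theta}$ gives $(\Int\theta)^\circ=\theta^\circ$, one obtains $N_x(U)^\circ\subset\theta^\circ$, that is $N(U)^{\circ a}\subset\BBV\times\theta^{\circ a}$ after applying the antipodal map. Now \cite{KS90}*{Prop.~5.3.8} yields $\SSi(\cor_U)\subset N(U)^{\circ a}$, whence $\SSi(\cor_U)\subset\BBV\times\theta^{\circ a}$, and Lemma~\ref{le:affine2} gives $U=U+\theta$.

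For the converse ``$U=U+\theta\Rightarrow U$ is a $\gamma$-set'' I would argue directly on normal cones. By \cite{KS90}*{Prop.~5.3.7~(ii)} the hypothesis gives $N(U)^\circ\subset\BBV\times\theta^\circ$, i.e.\ $N_x(U)^\circ\subset\theta^\circ$ for each $x$. Applying the polar once more and using that $N_x(U)$ is an open convex cone — so that $N_x(U)^{\circ\circ}=\ol{N_x(U)}$ and $\Int\ol{N_x(U)}=N_x(U)$ — one finds $\theta=\theta^{\circ\circ}\subset\ol{N_x(U)}$, hence $\Int\theta\subset\Int\ol{N_x(U)}=N_x(U)$. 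Thus $\Int\gamma\subset N(U)$, so $U$ is a $\gamma$-set.

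The routine points are the polar/antipodal bookkeeping and the standard facts about open convex cones. The only place where genuine input is needed is the ``$\gamma$-set $\Rightarrow U=U+\theta$'' direction: there the inclusion $\SSi(\cor_U)\subset N(U)^{\circ a}$ runs the ``wrong way'' to be inverted directly, so one cannot stay within normal cones and must detour through the micro-support characterisation of Lemma~\ref{le:affine2}. I expect this asymmetry — using \cite{KS90}*{Prop.~5.3.7~(ii)} in one direction and \cite{KS90}*{Prop.~5.3.8} together with Lemma~\ref{le:affine2} in the other — to be the main structural subtlety, the remaining steps being formal.
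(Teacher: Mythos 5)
Your argument is correct, but it is not the route the paper takes. The paper's proof stays entirely at the level of normal cones: for ``$U=U+\theta\Rightarrow U$ is a $\gamma$-set'' it exhibits an explicit basis of constant cones $\{U\times\theta_n\}_n$ with $\theta_n$ increasing to $\Int\theta$ and invokes the characterization~\eqref{eq:strictNC3}; for the converse it uses the local absorption property $V\cap(V\cap U+\lambda)\subset U$ from~\eqref{eq:strictNC3} and globalizes to $U=U+\lambda$ for every closed convex cone $\lambda\subset\Int\theta$. You instead route one direction through the micro-support criterion: from $\Int\theta\subset N_x(U)$ you pass to $\SSi(\cor_U)\subset N(U)^{\circ a}\subset\BBV\times\theta^{\circ a}$ via \cite{KS90}*{Prop.~5.3.8} and conclude by Lemma~\ref{le:affine2}; and you handle the other direction by pure convex duality (bipolar theorem applied to the open convex cones $N_x(U)$). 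Both steps are legitimate and non-circular, since Lemma~\ref{le:affine2} is established in Section~\ref{section:affin} independently of Definition~\ref{def:gammaset}; in effect you are proving the linear case of Theorem~\ref{th:gammasets} directly, which is a coherent alternative organization. What the paper's proof buys is self-containedness (no sheaf theory, no appeal to \cite{KS90}*{Prop.~5.2.3} hidden inside Lemma~\ref{le:affine2}); what yours buys is that the convex-duality direction avoids the slightly delicate local-to-global step ``$V\cap(V\cap U+\lambda)\subset U$ for all boundary points implies $U=U+\lambda$''. Two small points you should make explicit: the identity $N_x(U)^{\circ\circ}=\ol{N_x(U)}$ requires $N_x(U)\neq\varnothing$, which is forced here because otherwise $N_x(U)^{\circ}$ would be the whole fibre $T^*_x\BBV$, contradicting $N_x(U)^\circ\subset\theta^\circ$ with $\theta^\circ$ proper; and $(\Int\theta)^\circ=\theta^\circ$ uses $\theta=\ol{\Int\theta}$, valid since $\theta$ is closed convex with non-empty interior.
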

\begin{proof}
(i) Assume that $U=U+\theta$.  Let us choose an increasing family $\theta_n$ of closed convex proper cones such that $\bigcup_n\theta_n=\Int\,\theta$. Then 
$\{U\times\theta_n\}_n$ is a basis of constant cones and   $U$ is $\gamma$-open by~\eqref{eq:strictNC3}. 

\spa
(ii) Conversely, asssume that $U$ is $\gamma$-open  in the sense of Definition~\ref{def:gammaset}. This implies that  for any  $x_0\in\partial U$ and any closed convex cone $\lambda\subset \Int\theta$, there exists an open neighborhood $V$ of $x_0$ such that $V\cap(V\cap U+\lambda)\subset U$. 
Hence, $U=U+\lambda$ for any any closed convex cone $\lambda\subset \Int\,\theta$, which implies $U=U+\theta$. 
\end{proof}

\begin{proposition}[\cite{JS16}*{Prop~1.19, 1.21}]\label{pro:gammaset}
Let $(M,\gamma)$ be a causal manifold.
\bnum
\item
A set $A$ is a $\gamma$-set if and only if $\gamma_x\subset N_x(A)$ for every $x\in\partial A$. 
\item 
A set $A$ is a $\gamma$-set if and only if $M\setminus A$ is a $\gamma^a$-set.
\item
The family of $\gamma$-sets is closed under arbitrary unions and intersections.
\item
The family of $\gamma$-sets is closed under taking closure and interior.
\item
If  $A$ is a $\gamma$-set, then $\clos{\Int{A}} = \clos{A}$ and $\Int{\clos{A}} = \Int{A}$.
\item
If $A$ is a $\gamma$-set and $\Int{A} \subset B \subset \ol{A}$, then $B$ is a $\gamma$-set.
\enum
\end{proposition}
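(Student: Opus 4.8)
The plan is to carry out everything through the translation-stability condition \eqref{eq:strictNC3}, fixing once and for all a basis of constant cones $\{U_i\times\theta_i\}_i$ for $\gamma$, and to treat the duality statement (2) by a direct computation of the normal cone of a complement. The workhorse I would first establish is that, for this fixed basis, $A$ is a $\gamma$-set if and only if $U_i\cap((U_i\cap A)+\Int\theta_i)\subset A$ for every $i$. One implication is \eqref{eq:strictNC3}; for the converse I use that the defining condition $\Int\gamma\subset N(A)$ is intrinsic, so that for any constant cone $U_i\times\theta_i\subset\gamma$ one has $\Int\theta_i\subset\Int\gamma_x\subset N_x(A)$ on $U_i$, which by \cite{KS90}*{(5.3.3)} is precisely the stated stability. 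Statement (1) is then the observation that this condition only carries information on $\partial A$: if $x\notin\partial A$ then one of $A$, $M\setminus A$ is empty near $x$, so $C_x(M\setminus A,A)\subset\{0\}$ and $\Int\gamma_x\subset N_x(A)$ holds automatically; hence $\Int\gamma\subset N(A)$ is equivalent to the fibrewise containment of $\Int\gamma_x$ in $N_x(A)$ for $x\in\partial A$ alone.

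For (2) I would compute $N(M\setminus A)$ directly. From the definition of the normal cone and the antipodal map one has $C_x(S,T)^a=C_x(T,S)$; with $S=M\setminus A$ and $T=A$ this gives $C_x(A,M\setminus A)=C_x(M\setminus A,A)^a$, whence
\[
N_x(M\setminus A)=T_xM\setminus C_x(A,M\setminus A)=\bigl(T_xM\setminus C_x(M\setminus A,A)\bigr)^a=N_x(A)^a .
\]
Since $(\Int\gamma_x)^a=\Int(\gamma^a_x)$ and $\partial A=\partial(M\setminus A)$, the condition ``$\Int\gamma^a_x\subset N_x(M\setminus A)$ for all $x$'' is exactly the antipodal image of ``$\Int\gamma_x\subset N_x(A)$ for all $x$'', which is (2).

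The technical core is the single inclusion
\begin{equation*}
U_i\cap\bigl((U_i\cap\ol A)+\Int\theta_i\bigr)\subset\Int A\qquad\text{whenever $A$ is a $\gamma$-set,}\tag{$\star$}
\end{equation*}
which I would prove in two steps. First, $(U_i\cap A)+\Int\theta_i$ is open, being a Minkowski sum with the open set $\Int\theta_i$, so $U_i\cap((U_i\cap A)+\Int\theta_i)$ is an open subset of $A$ and therefore of $\Int A$. Second, $(U_i\cap\ol A)+\Int\theta_i\subset(U_i\cap A)+\Int\theta_i$: given $y\in U_i\cap\ol A$ and $v\in\Int\theta_i$, choose $a\in U_i\cap A$ so close to $y$ that $v+(y-a)\in\Int\theta_i$, and write $y+v=a+(v+(y-a))$. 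From $(\star)$ everything else follows quickly. For (6), if $\Int A\subset B\subset\ol A$ and $z\in U_i\cap((U_i\cap B)+\Int\theta_i)$, then $z\in U_i\cap((U_i\cap\ol A)+\Int\theta_i)\subset\Int A\subset B$, so $B$ satisfies the constant-cone condition and is a $\gamma$-set; (4) is the special cases $B=\Int A$ and $B=\ol A$. For the first identity in (5), given $x\in\ol A$ I pick $i$ with $x\in U_i$ and $v_0\in\Int\theta_i$; then for small $t>0$ the point $x+tv_0$ lies in $U_i\cap((U_i\cap\ol A)+\Int\theta_i)\subset\Int A$ and tends to $x$, so $\ol A\subset\ol{\Int A}$ and hence $\ol{\Int A}=\ol A$. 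The identity $\Int\ol A=\Int A$ follows by duality: by (2) the set $M\setminus A$ is a $\gamma^a$-set, the first identity applied to it gives $\ol{\Int(M\setminus A)}=\ol{M\setminus A}$, and taking complements yields $\Int\ol A=\Int A$.

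Finally, (3) is pure set algebra once the fixed-basis characterization is in place: for a union $A=\bigcup_jA_j$ one has $U_i\cap((U_i\cap A)+\Int\theta_i)=\bigcup_jU_i\cap((U_i\cap A_j)+\Int\theta_i)\subset\bigcup_jA_j=A$; for an intersection $A=\bigcap_jA_j$ one uses $(U_i\cap A)+\Int\theta_i\subset\bigcap_j((U_i\cap A_j)+\Int\theta_i)$ and intersects, both valid for arbitrary families. I expect the main obstacle to be twofold: the bookkeeping with closures, interiors and Minkowski sums in $(\star)$ and in (5) --- in particular justifying the openness and the approximation $y+v=a+(v+(y-a))$ --- and, at the conceptual level, the need to run all the arguments through the fixed basis of constant cones and through \cite{KS90}*{(5.3.3)} rather than fibrewise, precisely because (as warned in \S\ref{sect:not}) the global cone $\gamma$ need not be the fibrewise union of the $\gamma_x$.
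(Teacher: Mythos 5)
The paper does not actually prove this proposition --- it is imported from \cite{JS16}*{Prop.~1.19, 1.21} --- so your argument has to stand on its own. Much of it does: the computation $C_x(A,M\setminus A)=C_x(M\setminus A,A)^a$, hence $N_x(M\setminus A)=N_x(A)^a$, is a complete and clean proof of (2); the reduction of (1) to the boundary is correct (note that, with $\gamma$ closed and containing the zero-section as in this paper, one must read $\Int\gamma_x\subset N_x(A)$ there, since $0\notin N_x(A)$ when $x\in\partial A$ --- you implicitly do so, and that is the right reading); and the derivations of (3)--(6) from your inclusion $(\star)$ and from the fixed-basis stability condition are all correct \emph{granted that condition}.

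The gap is precisely in your ``workhorse''. You claim that if $A$ is a $\gamma$-set then $U_i\cap((U_i\cap A)+\Int\theta_i)\subset A$ for \emph{every} member of an \emph{arbitrarily} fixed basis of constant cones, justified by saying that $\Int\theta_i\subset N_x(A)$ on $U_i$ ``by \cite{KS90}*{(5.3.3)} is precisely the stated stability''. It is not: (5.3.3) only produces, for each $x\in U_i$ and each $v\in\Int\theta_i$, \emph{some} neighborhood of $x$ and \emph{some} cone around $v$ with the stability property; upgrading this to stability over all of $U_i$ with the full cone $\Int\theta_i$ requires propagating along the segment $[a,a+v]$, which must stay inside $U_i$. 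Without a convexity (or smallness) hypothesis on the $U_i$ the claim is false, and the paper's own Example~\ref{exa:jub} shows it: take $M=\R^2\setminus Z$, $\gamma=M\times\theta$ there, and the basis $\{M\times\theta_n\}_n$ with $\theta_n$ an increasing family of closed convex proper cones whose union is $\Int\theta$ (this is a basis of constant cones, as in the proof of Lemma~\ref{lem:affin0}). The set $A=\futg{(-1,-1)}$ is a $\gamma$-set (part (i) of the proof of Proposition~\ref{pro:tbcle}); it contains $(-1,-1/2)$ but not $(-1,1/2)=(-1,-1/2)+(0,1)$, because any strictly causal path from $(-1,-1)$ reaching the upper half-plane would have to cross $Z$; yet $(0,1)\in\Int\theta_n$ for $n$ large. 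Since this uniform stability is what drives $(\star)$, and hence (3)--(6) --- in particular (3) genuinely needs all the sets $A_j$ to satisfy the condition for one \emph{common} basis --- the gap is not cosmetic. The repair is standard: refine to a basis whose $U_i$ are convex in their charts, and prove the uniform stability by the usual connectedness argument on $\sup\{t\in[0,1]:\ a+sv\in A\ \text{for all}\ s\le t\}$. With that in place, the rest of your proof goes through.
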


The $\gamma$-topology on $M$ was first defined in~\cite{KS90}*{\S~3.5}  when $M=\BBV$ is a vector space and 
$\gamma=M\times\theta$ for a closed convex cone $\theta\subset\BBV$. 

\begin{definition}[\cite{KS90}*{\S~3.5} and \cite{JS16}*{Def~1.23}]\label{def:gammatop}
Let $(M,\gamma)$ be a causal manifold.
\banum
\item
The $\gamma$-topology on $M$ is the topology for which the open sets are the $\gamma$-open sets. 
\item
One  denotes by $M_\gamma$ the space $M$ endowed with the  $\gamma$-topology  and by $\phi_\gamma\cl M\to M_\gamma$ the 
continuous map associated with the identity of the set $M$. 
\eanum
\end{definition}
One shall be aware that the closed sets for the $\gamma$-topology are not $\gamma$-sets in general. They are $\gamma^a$-sets. That is why it is better to avoid the terminology 
``a closed $\gamma$-set'' whose meaning could be a set which is  closed for the $\gamma$-topology as well as a closed set which is a $\gamma$-set. 
%

\subsubsection*{$\gamma$-topology and $\lambda$-topology}

\begin{definition}
Let $(M,\gamma)$ be a causal manifold  and let $\lambda=\gammc$.
\banum
\item
A locally closed  set $A\subset M$ is a $\lambda$-set if $\SSi(\cor_A)\subset \lambda$. 
\item
A $\lambda$-open set \lp resp.\ a  $\lambda$-closed set\rp\, 
is an open set \lp resp.\ a  closed set\rp\,which is also a $\lambda$-set.
\eanum
\end{definition}
Note that $U$ is $\lambda$-open if and only if $M\setminus U$ is $\lambda$-closed. 
\begin{remark} 
In~\cite{JS16}*{Not.~1.10 } we have set $\lambda=\gamma^{\circ}$, contrarily to the above notation.
\end{remark}

\begin{proposition}\label{pro:lambdaset}
Let $(M,\gamma)$ be a causal manifold.
\banum
\item
The family of $\lambda$-open sets is closed under arbitrary unions and finite intersections.
In particular, the family of $\lambda$-open subsets of $M$ defines a topology on $M$. 
\item
Similarly, the family of $\lambda$-closed sets is closed under arbitrary intersections and finite unions. 
\eanum
\end{proposition}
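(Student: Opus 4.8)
The plan is to deduce (b) from (a) by complementation and to prove (a) by treating intersections and unions separately, the finite operations being routine consequences of the tensor‑product estimate for micro‑supports, while the arbitrary union is the genuine difficulty. First I record that, by the observation that $U$ is $\lambda$-open if and only if $M\setminus U$ is $\lambda$-closed, statement (b) is exactly the De Morgan dual of (a): an arbitrary intersection (resp.\ a finite union) of $\lambda$-closed sets is the complement of an arbitrary union (resp.\ a finite intersection) of $\lambda$-open sets. Hence it suffices to prove (a).

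For finite intersections, let $U_1,U_2$ be $\lambda$-open. Then $\cor_{U_1\cap U_2}\simeq\cor_{U_1}\otimes\cor_{U_2}$, and since $\SSi(\cor_{U_j})\subset\lambda$ we have
\[
\SSi(\cor_{U_1})\cap\SSi(\cor_{U_2})^a\subset\lambda\cap\lambda^a=T^*_MM
\]
by~\eqref{eq:good}. The estimate on the micro-support of a tensor product \cite{KS90}*{Prop.~5.4.1} then gives $\SSi(\cor_{U_1\cap U_2})\subset\SSi(\cor_{U_1})+\SSi(\cor_{U_2})\subset\lambda+\lambda=\lambda$, again by~\eqref{eq:good}; an immediate induction settles all finite intersections. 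The same computation applied to two \emph{closed} sets shows that a finite intersection of $\lambda$-closed sets is $\lambda$-closed, whence, by complementation, a finite union of $\lambda$-open sets is $\lambda$-open.

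The heart of the matter is stability under arbitrary unions. Given a family $\{U_i\}_{i\in I}$ of $\lambda$-open sets, set $U=\bigcup_iU_i$ and, for a finite subset $J\subset I$, $U_J=\bigcup_{i\in J}U_i$. By the previous paragraph each $U_J$ is $\lambda$-open, and the $U_J$ form a filtered system of open sets with union $U$, so that $\cor_U\simeq\varinjlim_J\cor_{U_J}$ as a filtered colimit of sheaves, computed stalkwise. It then remains to control the micro-support of this colimit: one expects
\[
\SSi\bigl(\varinjlim_J\cor_{U_J}\bigr)\subset\overline{\textstyle\bigcup_J\SSi(\cor_{U_J})},
\]
and since each $\SSi(\cor_{U_J})\subset\lambda$ with $\lambda$ closed, this yields $\SSi(\cor_U)\subset\lambda$, i.e.\ $U$ is $\lambda$-open.

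The step I expect to be the main obstacle is precisely this last inclusion, the good behaviour of the micro-support under filtered colimits, which is not among the binary estimates recalled from~\cite{KS90}. Concretely it reduces to the fact that, on a finite‑dimensional paracompact manifold, the functors $R\Gamma_Z(\,\cdot\,)$ for $Z$ closed—equivalently the local cohomology objects entering the microlocal characterization of $\SSi$—commute with filtered colimits; granting this, a covector $(x_0;\xi_0)\notin\lambda$ admits a conic neighbourhood meeting no $\SSi(\cor_{U_J})$, so the relevant local cohomology vanishes for every $U_J$, hence for the colimit, and $(x_0;\xi_0)\notin\SSi(\cor_U)$. As an alternative avoiding colimits, one could argue locally over a constant cone $U'\times\theta$ of a basis for $\gamma$: there $\lambda_x\subset\theta^{\circ a}$ for $x\in U'$, each $U_i\cap U'$ is $\theta$-open by Lemma~\ref{le:affine2}, unions of $\theta$-open sets are manifestly $\theta$-open (stability under $+\,\Int\theta$), and Lemma~\ref{le:affine2} read backwards gives $\SSi(\cor_U)\subset\theta^{\circ a}$ over $U'$; intersecting over all constant cones through a point $x_0$ recovers $\SSi(\cor_U)_{x_0}\subset\bigcap\theta^{\circ a}=\lambda_{x_0}$. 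This second route trades the colimit bound for a careful treatment of uniformity in $i$ and of the chart boundaries.
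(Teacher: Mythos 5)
Your argument is correct and takes essentially the same route as the paper: the tensor-product estimate (legitimized by $\lambda\cap\lambda^a=T^*_MM$ and $\lambda+\lambda=\lambda$) for finite intersections, complementation for (b) and for finite unions, and reduction of an arbitrary union to a directed system of finite unions whose colimit's micro-support must be controlled. The filtered-colimit inclusion $\SSi(\varinjlim_JF_J)\subset\overline{\bigcup_J\SSi(F_J)}$ that you single out as the main obstacle is exactly \cite{KS90}*{Exe.~5.7}, which is what the paper invokes at that step, so no gap remains.
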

\begin{proof}
Since an open set $U$ is $\lambda$-open if and only if $M\setminus U$ is $\lambda$-closed, it is enough to prove (a).

\spa
(i) If $U_1$ and $U_2$ are $\lambda$-open, so is $U_1\cap U_2$ since $\cor_{U_1\cap U_2}\simeq\cor_{U_1}\tens\cor_{U_2}$ and 
$\SSi(\cor_{U_1}\tens\cor_{U_2})\subset\lambda$ by~\eqref{eq:good}.

\spa
(ii) If $U_1$ and $U_2$ are $\lambda$-open, so is $U_1\cup U_2$. This follows from (i) and the distinguished triangle
$\cor_{U_1\cap U_2}\to \cor_{U_1}\oplus\cor_{U_2}\to\cor_{U_1\cup U_2}\to[+1]$. 

\spa
(iii) Let $\{U_i\}_{i\in I}$ be a  family of $\lambda$-open subsets. Let us order $I$ by the relation $i\leq j$ if $U_i\subset U_j$. We may assume that $I$ is non empty and by (ii) we may assume that $(I,\leq)$ is directed. It then follows from~\cite{KS90}*{Exe.~5.7} that, setting 
$U=\bigcup_{i\in I}U_i$, $\SSi(\cor_U)\subset \lambda$.

\spa
(iv) Since $\SSi(\cor_M)= T^*_MM\subset\lambda$ and $\SSi(\cor_\varnothing)=\varnothing$, both $M$ and $\varnothing$ are $\lambda$-open and 
the proof is complete. 
\end{proof}

\begin{theorem}\label{th:gammasets}
Let $(M,\gamma)$ be a causal manifold and recall that we have set $\lambda=\gammc$. 
\banum
\item
An open set $U$ of $M$ is a $\gamma$-set if and only if it is a $\lambda$-set.
\item
A closed subset $Z$ of $M$ is a  $\gamma^a$-set if and only if it is a $\lambda$-set. In particular, a closed $\lambda$-set is closed for the $\gamma$-topology. 
\eanum
\end{theorem}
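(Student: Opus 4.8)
The plan is to establish (a) by reducing it, point by point on $\partial U$, to the linear statement of Lemma~\ref{le:affine3}, and then to deduce (b) from (a) by passing to complements. Both the property of being a $\gamma$-set and the condition $\SSi(\cor_U)\subset\lambda$ are local on $\partial U$: away from $\partial U$ the sheaf $\cor_U$ is locally constant, so its micro-support meets $T^*M$ only along the zero-section $T^*_MM\subset\lambda$. I would therefore fix $x_0\in\partial U$, choose a basis of constant cones $\{U_i\times\theta_i\}_i$ for $\gamma$ (Definition~\ref{def:ctcone}), and work in a chart near $x_0$. The algebraic bridge is the fibrewise identity $\lambda_{x_0}=\gamma_{x_0}^{\circ a}=\bigcap_{x_0\in U_i}\theta_i^{\circ a}$, which holds because $\gamma_{x_0}=\ol{\Int\gamma_{x_0}}=\ol{\bigcup_{x_0\in U_i}\theta_i}$ by the defining property of a basis of constant cones, and polars turn unions into intersections.

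For (a), fix $x_0\in\partial U$ and an index $i$ with $x_0\in U_i$, and note that $\theta_i\subset\gamma_x$ for every $x\in U_i$ since $U_i\times\theta_i\subset\gamma$. If $U$ is a $\gamma$-set, then $U_i\times\Int\theta_i\subset\Int\gamma\subset N(U)$, so $U\cap U_i$ is a $(U_i\times\theta_i)$-set near $x_0$; Lemma~\ref{le:affine3} applied in the chart gives $\SSi(\cor_U)_{x_0}\subset\theta_i^{\circ a}$, and intersecting over all such $i$ yields $\SSi(\cor_U)_{x_0}\subset\bigcap_i\theta_i^{\circ a}=\lambda_{x_0}$. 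Conversely, if $\SSi(\cor_U)\subset\lambda$, then $\theta_i\subset\gamma_x$ forces $\lambda_x\subset\theta_i^{\circ a}$ for $x\in U_i$, so $\SSi(\cor_{U\cap U_i})\subset U_i\times\theta_i^{\circ a}$ in a neighborhood of $x_0$; Lemma~\ref{le:affine3} then shows $U\cap U_i$ is $(U_i\times\theta_i)$-open near $x_0$, that is $\theta_i\subset N_{x_0}(U)$ by the constant-cone case of Proposition~\ref{pro:gammaset}(1). Taking the union over the indices $i$ with $x_0\in U_i$ recovers $\Int\gamma_{x_0}=\bigcup_i\theta_i\subset N_{x_0}(U)$; since the inclusion $\Int\gamma_x\subset N_x(U)$ is automatic for $x\notin\partial U$, we obtain $\Int\gamma\subset N(U)$, i.e.\ $U$ is a $\gamma$-set.

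For (b) I would argue by complementation. By Proposition~\ref{pro:gammaset}(2) a closed set $Z$ is a $\gamma^a$-set if and only if the open set $U=M\setminus Z$ is a $\gamma$-set, which by (a) is equivalent to $\SSi(\cor_U)\subset\lambda$. The distinguished triangle $\cor_U\to\cor_M\to\cor_Z\to[+1]$ shows that $\SSi(\cor_Z)$ and $\SSi(\cor_U)$ can differ only inside the zero-section $T^*_MM\subset\lambda$, so $\SSi(\cor_U)\subset\lambda$ holds if and only if $\SSi(\cor_Z)\subset\lambda$, i.e.\ if and only if $Z$ is a $\lambda$-set. The final assertion is then immediate: a closed $\lambda$-set $Z$ is a $\gamma^a$-set, so $M\setminus Z$ is $\gamma$-open and $Z$ is closed for the $\gamma$-topology.

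The main obstacle is making the reduction to the linear case watertight, that is, certifying that $\gamma$-ness and the micro-support bound are genuinely detected chart by chart through the constant cones and that one may freely pass between the single cone $\gamma$ and the family $\{\theta_i\}$. The two delicate points are the fibrewise identity $\lambda_{x_0}=\bigcap_i\theta_i^{\circ a}$ (resting on $\gamma_{x_0}=\ol{\Int\gamma_{x_0}}$ and on the definition of a basis of constant cones) and the careful bookkeeping between the \emph{open} cone $\Int\theta_i$, used to certify membership in the open convex cone $N(U)$, and the \emph{full} cone $\theta_i$ returned by the constant-cone version of Proposition~\ref{pro:gammaset}(1). Once these are settled, the remaining steps are a routine localisation of $\SSi(\cor_U)$ near $x_0$ and the interchange of $\SSi(\cor_U)$ with $\SSi(\cor_Z)$ modulo the zero-section.
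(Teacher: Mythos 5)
Your proof is correct and, for the substantive implication ($\lambda$-set $\Rightarrow$ $\gamma$-set) as well as for part (b), it is essentially the paper's own argument: localize to constant cones, invoke Lemma~\ref{le:affine3}, and pass to complements using that $\SSi(\cor_U)$ and $\SSi(\cor_{M\setminus U})$ agree outside the zero-section. The only real divergence is the easy implication: the paper obtains $\SSi(\cor_U)\subset N(U)^{\circ a}\subset \gammc$ in one line from \cite{KS90}*{Prop.~5.3.8}, whereas you re-derive the same bound chart by chart through Lemma~\ref{le:affine3} and the identity $\lambda_{x_0}=\bigcap_i\theta_i^{\circ a}$ --- valid, but a detour, since Lemma~\ref{le:affine2}~(i) itself rests on that same proposition. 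One caution on the bookkeeping point you flag yourself: the strict normal cone $N_{x_0}(U)$ is \emph{open}, so what the linear lemma really yields is $\Int\theta_i\subset N_{x_0}(U)$, and $\bigcup_i\Int\theta_i$ can be a proper subset of $\bigcup_i\theta_i=\Int\gamma_{x_0}$ for a badly chosen basis of constant cones; this is repaired by arguing pointwise --- every $v\in\Int\gamma_{x_0}$ lies in $\Int\theta$ for some constant cone $U\times\theta$ contained in $\gamma$ --- rather than by taking the union over a fixed basis.
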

\begin{proof}
(a)--(i) We know by~\cite{KS90}*{Prop.~5.3.8} that $\SSi(\cor_U)\subset N(U)^{\circ a}$. If $U$ is a $\gamma$-set, then 
 $\Int\gamma\subset N(U)$ and thus $\SSi(\cor_U)\subset N(U)^{\circ a}\subset \gammc=\lambda$.

 \spa
 (a)--(ii) Now assume that $\SSi(\cor_U)\subset \gammc$. 
The problem is local on $M$ and we may assume that $M$ is open in a vector space $\BBV$. 
Let $U\subset M$ be an open subset and assume that $\SSi(\cor_U)\subset \lambda$. Let $W$ be an open and convex set and $\theta$ a open convex cone 
with $W\times_M\lambda\subset W\times\theta^{\circ a}$. Then $U\cap W$ is  $(U\cap W)\times\theta$-open  by Lemma~\ref{le:affine3}. 
It follows that $U$ is $\gamma$-open by~\eqref{eq:strictNC3}.

\spa
(b) Set $U=M\setminus Z$. Then $Z$ is a $\lambda$-set if and only if so is $U$. On the other-hand, 
$U$ is a $\gamma$-set if and only if $Z$ is a $\gamma^a$-set.
\end{proof}

\begin{definition}\label{def:lambdatop}
Let $(M,\gamma)$ be a causal manifold.
The $\lambda$-topology on $M$ is the topology for which the open sets are the $\lambda$-open sets. Hence, the closed sets are the closed $\lambda^a$-sets.
\end{definition}
It follows from Theorem~\ref{th:gammasets} that the $\gamma$-topology and the $\lambda$-topology coincide. 

\begin{proposition}\label{pro:new}
Let $f\cl (M,\gamma_M)\to(N,\gamma_N)$ be a morphism of causal manifolds and set 
$\lambda_M=\gamma_M^{\circ a}$, $\lambda_N=\gamma_N^{\circ a}$. Assume that 
$f$ is non-characteristic with respect to $\lambda_N$, that is:
\eq\label{eq:noncar1}
&&\opb{f_\pi}(\lambda_N)\cap\opb{f_d}(T^*_MM)\subset M\times_NT^*_NN.
\eneq
Then the map $f$ induces a continuous map $M_{\gamma_M}\to N_{\gamma_N}$.
\end{proposition}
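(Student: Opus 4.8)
The plan is to prove continuity directly from the definition: I would show that the inverse image under $f$ of any $\gamma_N$-open subset $V$ of $N$ is a $\gamma_M$-open subset of $M$. Set $U=\opb{f}V$. Since $f$ is a morphism of manifolds it is continuous for the usual topologies, so $U$ is open in $M$; the whole difficulty is therefore to check that $U$ is a $\gamma_M$-set. Here I would immediately pass to the micro-local side: by Theorem~\ref{th:gammasets}~(a), $V$ being $\gamma_N$-open is equivalent to $\SSi(\cor_V)\subseteq\lambda_N$, and likewise $U$ will be $\gamma_M$-open as soon as $\SSi(\cor_U)\subseteq\lambda_M$. So everything reduces to a micro-support estimate.

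The key tool is the bound for the micro-support of a non-characteristic inverse image. One has the identity $\opb{f}\cor_V\simeq\cor_U$, so I would first verify that $f$ is non-characteristic with respect to the sheaf $\cor_V$ in the sense of~\cite{KS90}*{Prop.~5.4.13}. This follows from the hypothesis~\eqref{eq:noncar1}, because $\SSi(\cor_V)\subseteq\lambda_N$ gives
\[
\opb{f_\pi}\SSi(\cor_V)\cap\opb{f_d}(T^*_MM)\subseteq\opb{f_\pi}(\lambda_N)\cap\opb{f_d}(T^*_MM)\subseteq M\times_NT^*_NN.
\]
Applying~\cite{KS90}*{Prop.~5.4.13} and then using the hypothesis that $f$ is a morphism of causal manifolds in the equivalent $\lambda$-formulation of Definition~\ref{def:causal}~(c), namely $f_d\opb{f_\pi}\lambda_N\subseteq\lambda_M$, I obtain the chain of inclusions
\[
\SSi(\cor_U)=\SSi(\opb{f}\cor_V)\subseteq f_d\opb{f_\pi}\SSi(\cor_V)\subseteq f_d\opb{f_\pi}\lambda_N\subseteq\lambda_M.
\]
Hence $U$ is a $\lambda_M$-set, so $\gamma_M$-open by Theorem~\ref{th:gammasets}~(a), which is exactly the continuity of $f\cl M_{\gamma_M}\to N_{\gamma_N}$.

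The proof is thus short once the translation into micro-supports is in place; the only point that requires care, and which I regard as the main obstacle, is the matching of the two distinct notions of ``non-characteristic'': the hypothesis~\eqref{eq:noncar1} is stated for the cone $\lambda_N$, whereas the estimate of~\cite{KS90}*{Prop.~5.4.13} demands non-characteristicity for the concrete sheaf $\cor_V$. The inclusion $\SSi(\cor_V)\subseteq\lambda_N$ supplied by Theorem~\ref{th:gammasets} bridges precisely this gap, and after that all the remaining work is the purely formal cone bookkeeping recorded in the two displayed chains of inclusions. I would also make explicit that openness of $U$ comes only from ordinary continuity of $f$, so that the $\gamma$-structure enters solely through the micro-support computation.
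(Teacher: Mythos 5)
Your proof is correct and follows essentially the same route as the paper: both reduce continuity to the micro-support estimate $\SSi(\cor_{\opb{f}V})\subset\lambda_M$ via Theorem~\ref{th:gammasets}, verify the non-characteristic condition for $\cor_V$ from \eqref{eq:noncar1}, and conclude with \cite{KS90}*{Prop.~5.4.13} together with $f_d\opb{f_\pi}\lambda_N\subset\lambda_M$. Your write-up merely makes explicit the steps the paper compresses into the statement that $\opb{f}$ induces a functor $\Derb_{\lambda_N}(\cor_N)\to\Derb_{\lambda_M}(\cor_M)$.
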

\begin{proof}
It follows from the hypothesis and~\cite{KS90}*{Prop.~5.4.13} that the inverse image  functor $\opb{f}$ induces a functor
\eq\label{eq:noncar2}
&& \opb{f}\cl\Derb_{\lambda_N}(\cor_{N})\to \Derb_{\lambda_M}(\cor_{M}).
\eneq
Now let $V$ be a $\gamma_N$-open subset of $N$, that is, $V$ is open and $\SSi(\cor_V)\subset \lambda_N$.
Then $\SSi(\opb{f}\cor_V)\subset\lambda_M$ by \eqref{eq:noncar2} and the result follows since 
$\opb{f}\cor_V\simeq\cor_{\opb{f}V}$.
\end{proof}

\section{Preorders on causal manifolds}
 
\subsubsection*{Preorders}
Consider a preorder $\preceq$ on a manifold $M$ and its graph $\gPo\subset M\times M$.
Then
\eqn
&&\Delta\subset\gPo\, \quad \gPo\circ\gPo=\gPo\,.
\eneqn

For a subset $A\subset M$, one sets
\eq\label{eq:J+}
&&\left\{
\parbox{70ex}{
$\futo A = q_2(\opb{q_1}(A)\cap \gPo)=\{x\in M;\mbox{ there exists } y\in A\mbox{ with } y\preceq x\}$,\\
$\paso A = q_1(\opb{q_2}(A)\cap \gPo)=\{x\in M;\mbox{ there exists } y\in A\mbox{ with } x\preceq y\}$.
}\right.
\eneq
One calls  $\futo A$ the future of $A$ and $\paso A$ the past of $A$ (for the preorder $\preceq$). 
 For $x\in M$, we write $\futo x$ and $\paso x$ instead of $\futo{\{x\}}$ and $\paso{\{x\}}$ respectively. 
 Note that $\futo A=\bigcup_{x\in A}\futo x$ and  $\paso A=\bigcup_{x\in A}\paso x$. 

By its definition, one has
\eq\label{eq:deltag}
&&\gPo=\bigcup_{x\in M}\paso{x}\times\futo{x} = \bigcup_{x\in M}\{x\}\times\futo{x}.
\eneq
 
\begin{definition}\label{def:regcone}
Let $\preceq$ be a preorder on $M$.
\banum
\item
The preorder is closed if $\gPo$ is closed in $M\times M$.
\item
The preorder is proper if $q_{13}$ is proper on $\gPo\times_M\gPo$.
Equivalently, the preorder is proper if for any two compact subsets $A$ and $B$ of $M$, the so-called causal diamond
 $\futo A \cap \paso B$ is compact.
\eanum
\end{definition}

If the preorder $\preceq$ is closed, then  for any compact subset $A$ of $M$ the sets $\paso A$ and $\futo A$ are closed.
If the preorder is proper, then it is closed. A preorder is proper as soon as  $\futo x \cap \paso y$ is compact for any two $x,y\in M$.

\subsubsection*{The $\gamma$-preorder}

This subsection is, with some modifications, extracted from~\cite{JS16}*{\S~1.4}. 

Let $(M,\gamma)$ be a causal manifold. 
\begin{definition}\label{def:chronofut}
For $A\subset M$, we denote by $\futg{A}$ the intersection of all the $\gamma$-sets which contain $A$ and call it the $\gamma$-future  of $A$.
We set $\futg{x}=\futg{\{x\}}$.
\end{definition}
Note that a set $A$ is a $\gamma$-set if and only if $\futg{A}=A$.

Let us set
\eqn
&&x\preceqg y\mbox{ if and only if }y\in\futg{x}. 
\eneqn

\begin{lemma}\label{le:gDel3}
\banum
\item
The relation $x\preceqg y$ is a preorder.
\item
For $A\subset M$,  $\futg A=\bigcup_{x\in A}\futg x$. In other words, 
with the preceding notations, $\futg{A}=J^+_{\preceqg}(A)$.
  \eanum
\end{lemma}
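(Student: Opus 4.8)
The plan is to exploit the single structural fact that underlies everything: according to Proposition~\ref{pro:gammaset}, the family of $\gamma$-sets is stable under arbitrary intersections, so the intersection defining $\futg{A}$ is again a $\gamma$-set. Consequently $\futg{A}$ is the \emph{smallest} $\gamma$-set containing $A$, and the operation $A\mapsto\futg{A}$ is a closure operator: it is extensive ($A\subset\futg{A}$), monotone ($A\subset B$ implies $\futg{A}\subset\futg{B}$), and idempotent ($\futg{\futg{A}}=\futg{A}$, since $\futg{A}$ is already a $\gamma$-set). Both assertions follow formally from these three properties, together with the stability of $\gamma$-sets under arbitrary \emph{unions}, also recorded in Proposition~\ref{pro:gammaset}.

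For (a), reflexivity $x\preceqg x$ is just extensivity, $x\in\futg{x}$. For transitivity, suppose $x\preceqg y$ and $y\preceqg z$, that is $y\in\futg{x}$ and $z\in\futg{y}$. Here I would use that $\futg{x}$ is a $\gamma$-set containing $y$; since $\futg{y}$ is the smallest $\gamma$-set containing $y$, this forces $\futg{y}\subset\futg{x}$, whence $z\in\futg{y}\subset\futg{x}$, that is $x\preceqg z$. Thus $\preceqg$ is a preorder.

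For (b), I would prove the two inclusions separately. The inclusion $\bigcup_{x\in A}\futg{x}\subset\futg{A}$ is monotonicity applied to each inclusion $\{x\}\subset A$. For the reverse inclusion, set $B=\bigcup_{x\in A}\futg{x}$; by Proposition~\ref{pro:gammaset} (stability under arbitrary unions) $B$ is a $\gamma$-set, and by extensivity it contains $A$, so the minimality of $\futg{A}$ gives $\futg{A}\subset B$. This yields $\futg{A}=\bigcup_{x\in A}\futg{x}$. The identification with $J^+_{\preceqg}(A)$ is then a matter of unwinding the definition~\eqref{eq:J+}: an element $z$ lies in $\bigcup_{x\in A}\futg{x}$ if and only if there exists $x\in A$ with $z\in\futg{x}$, i.e.\ $x\preceqg z$, which is precisely the defining condition for $J^+_{\preceqg}(A)$.

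There is no genuine obstacle here; the entire content is concentrated in the closure properties of Proposition~\ref{pro:gammaset}. The only point deserving a moment's care is to record first, before anything else, that $\futg{A}$ is itself a $\gamma$-set, and hence the smallest $\gamma$-set containing $A$; once this is in place, reflexivity, transitivity, and both inclusions in (b) become immediate.
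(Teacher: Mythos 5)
Your proof is correct and follows essentially the same route as the paper: transitivity via the minimality of $\futg{y}$ among $\gamma$-sets containing $y$, and part (b) by showing $\bigcup_{x\in A}\futg{x}$ is a $\gamma$-set containing $A$ that is contained in $\futg{A}$. Your explicit framing of $A\mapsto\futg{A}$ as a closure operator, and the remark that one should first record that $\futg{A}$ is itself a $\gamma$-set by stability under intersections, merely make explicit what the paper leaves implicit.
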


\begin{proof}
(a) Let $y\in\futg{x}$ and $z\in\futg{y}$.
Then $\futg{x}$ is a $\gamma$-set which contains $y$ and $\futg{y}$ is the smallest $\gamma$-set which contains $y$.
Therefore, $\futg{y}\subset\futg{x}$ and $z\in\futg{x}$.

\spa
(b) The term on the right-hand side is a $\gamma$-set, contains $A$  and is contained in $\futg A$ (since $\futg x\subset\futg A$ for $x\in A$), whence the equality. 
It follows that $y\in\futg A$ if and only if there exists $x\in A$ such that $x\preceqg y$. 
\end{proof}

According to~\eqref{eq:J+}, one has 
\eq\label{eq:pasog}
&&\ba{l}
\futg{x}=\futog{x},\\
\pasog{y}=\pasog{\{y\}}\eqdot\{x;  x\preceqg y\}, \quad \pasog{A}=\bigcup_{y\in A}\pasog{y}.
\ea\eneq

\begin{remark}\label{rem:pasfutg}
Recall  the causal manifold $(I,+)$ of Definition~\ref{def:causal}~(d).
On $(I,+)$ the preorder $\preceqg$ is the usual order $ \leq$.
\end{remark}

\begin{proposition}\label{pro:Iopen}
Let $A\subset M$ be a closed subset of $M$.
Then $\futg{A} \setminus A$ is open.
\end{proposition}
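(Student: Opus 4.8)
The plan is to reduce the statement to the case of a single point and then to a local normal-cone computation. Write $S=\futg{A}$. Being an intersection of $\gamma$-sets, $S$ is itself a $\gamma$-set. I claim it suffices to prove the inclusion $S\setminus A\subset\Int{S}$. Indeed, if this holds then $S\setminus A=(S\setminus A)\cap\Int{S}=\Int{S}\cap(M\setminus A)$, since the reverse inclusion $\Int{S}\cap(M\setminus A)\subset S\cap(M\setminus A)=S\setminus A$ is clear; as $A$ is closed, this exhibits $S\setminus A$ as an intersection of two open sets, hence open.

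Next I would reduce to a single point. By Lemma~\ref{le:gDel3}~(b) we have $S=\bigcup_{x\in A}\futg{x}$. Given $y\in S\setminus A$, choose $x\in A$ with $y\in\futg{x}$; since $x\in A$ and $y\notin A$ we have $y\neq x$. Because $\futg{x}\subset S$ we have $\Int{\futg{x}}\subset\Int{S}$, so it is enough to prove $y\in\Int{\futg{x}}$. Everything thus reduces to the assertion that, for every $x\in M$, $\futg{x}\setminus\{x\}\subset\Int{\futg{x}}$, i.e.\ that $\futg{x}\setminus\{x\}$ is open (the case $A=\{x\}$ of the Proposition).

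To prove this local statement, fix a basis of constant cones $\{U_i\times\theta_i\}_i$ witnessing, through~\eqref{eq:strictNC3}, that $\futg{x}$ is a $\gamma$-set, so that $U_i\cap(U_i\cap\futg{x}+\Int\theta_i)\subset\futg{x}$ for all $i$. Let $y\in\futg{x}$ with $y\neq x$, and assume for contradiction that $y\notin\Int{\futg{x}}$. There are two cases. If for some $i$ with $y\in U_i$ there exists $w\in\futg{x}\cap U_i$ with $w\in y-\Int\theta_i$, then $y\in(w+\Int\theta_i)\cap U_i$, an open neighborhood of $y$ which the forward-propagation inclusion places inside $\futg{x}$; this gives $y\in\Int{\futg{x}}$, a contradiction. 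Otherwise $\futg{x}\cap U_i\cap(y-\Int\theta_i)=\varnothing$ for every $i$ with $y\in U_i$, and I would then show that $\futg{x}\setminus\{y\}$ is again a $\gamma$-set; since it still contains $x$ (as $y\neq x$) and is strictly smaller than $\futg{x}$, this contradicts the minimality in Definition~\ref{def:chronofut}. To verify the $\gamma$-set property one uses Proposition~\ref{pro:gammaset}~(a): at boundary points $z\neq y$ nothing changes, while at $y$ the only directions newly contributing to $C_y\bigl((M\setminus\futg{x})\cup\{y\},\,\futg{x}\setminus\{y\}\bigr)$ are limits of $c_n(y-y_n)$ with $y_n\in\futg{x}$, $y_n\to y$; the emptiness of the backward intersections forces such directions to avoid $\Int\gamma_y=\bigcup_{y\in U_i}\theta_i$, whence $\Int\gamma_y\subset N_y(\futg{x}\setminus\{y\})$ and the removal is legitimate. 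Both cases being absurd, $y\in\Int{\futg{x}}$.

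The \textbf{main obstacle} is the second case, namely the verification that a single boundary point can be deleted from $\futg{x}$ without destroying the $\gamma$-set property. This is the only place where one must argue carefully with normal cones, tracking how $C_y$ changes when the point $y$ is adjoined to the complement of $\futg{x}$; the reduction to a point and the forward-propagation inclusion are routine consequences of Lemma~\ref{le:gDel3} and~\eqref{eq:strictNC3}.
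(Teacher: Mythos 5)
Your reduction to the single\--point statement and the overall strategy (show every $y\in\futg{x}\setminus\{x\}$ is interior, otherwise delete $y$ and contradict minimality) are sound, and Case~1 is correct. The gap is in Case~2, at the sentence ``the emptiness of the backward intersections forces such directions to avoid $\Int\gamma_y=\bigcup_{y\in U_i}\theta_i$''. What the emptiness of $\futg{x}\cap U_i\cap(y-\Int\theta_i)$ actually gives is $c_n(y-y_n)\notin\Int\theta_i$ for all $n$ and all relevant $i$, hence only $v=\lim_n c_n(y-y_n)\notin\Int\theta_i$ for each $i$, i.e.\ $v\notin\bigcup_i\Int\theta_i$. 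This does not yield $v\notin\bigcup_i\theta_i=\Int(\gamma_y)$: the $\theta_i$ are \emph{closed} cones, and a point of the open cone $\Int(\gamma_y)$ may lie on the boundary of every $\theta_i$ containing it. Concretely, in $\R^2$ with $\Int(\gamma_y)=\{(a,b);b>\vert a\vert\}$, one may take $\theta_1$ the closed cone generated by $(0,1)$ and $(1,2)$, $\theta_2$ the one generated by $(0,1)$ and $(-1,2)$, plus further closed cones covering the rest and avoiding the ray through $(0,1)$; then $v=(0,1)$ belongs to no $\Int\theta_i$, so a sequence $y_n=y-(0,1)/n$ lying in $\futg{x}$ is not excluded by your Case~2 hypothesis, yet it produces $v\in\Int(\gamma_y)$ in $C_y(\{y\},\futg{x}\setminus\{y\})$. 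So the verification that $\futg{x}\setminus\{y\}$ is a $\gamma$-set breaks down at exactly the point where it matters. The step can be repaired by not working with one basis furnished by \eqref{eq:strictNC3} but arguing direction by direction: since $\Int\gamma\subset N(\futg{x})$, \cite{KS90}*{(5.3.3)} provides, for each $v_0\in\Int(\gamma_y)$, a neighborhood $U$ of $y$ and an \emph{open} conic neighborhood $\theta$ of $v_0$ with $U\cap(U\cap\futg{x}+\theta)\subset\futg{x}$; running your dichotomy over these data, so that $v_0\in\theta$ by construction, closes the gap.

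For comparison, the paper's proof bypasses all normal-cone computations: one has $\Int(\futg{A})\subset\Int(\futg{A})\cup A\subset\futg{A}\subset\ol{\futg{A}}$, so by the item of Proposition~\ref{pro:gammaset} asserting that any set squeezed between the interior and the closure of a $\gamma$-set is again a $\gamma$-set, $\Int(\futg{A})\cup A$ is a $\gamma$-set containing $A$; minimality forces $\Int(\futg{A})\cup A=\futg{A}$, whence $\futg{A}\setminus A=\Int(\futg{A})\setminus A$ is open. You may want to compare your Case~2 with this sandwich argument, which is what replaces the delicate point-deletion step.
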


\begin{proof}
One has $\Int(\futg{A}) \subset \Int(\futg{A}) \cup A \subset \futg{A}$.
Applying Proposition~\ref{pro:gammaset}~(iv), we get that $\Int(\futg{A}) \cup A$ is a $\gamma$-set.
Since it contains $A$, it contains $\futg{A}$.
Therefore $\Int(\futg{A}) \cup A=\futg{A}$  and $\futg{A} \setminus A = \Int(\futg{A}) \setminus A$ is open.
\end{proof}

\begin{corollary}\label{cor:gopen}
Let $U$ be open in $M$. Then $\futg{U}$ is $\gamma$-open. 
\end{corollary}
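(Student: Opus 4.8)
The plan is to separate the two requirements hidden in the conclusion ``$\futg{U}$ is $\gamma$-open''. By Definition~\ref{def:chronofut}, $\futg{U}$ is the intersection of all $\gamma$-sets containing $U$, and by Proposition~\ref{pro:gammaset}~(iii) the family of $\gamma$-sets is closed under arbitrary intersections. Hence $\futg{U}$ is automatically a $\gamma$-set, and the entire content of the corollary reduces to showing that $\futg{U}$ is \emph{open}, that is, $\futg{U}=\Int(\futg{U})$.

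First I would record the trivial inclusion $U\subset\futg{U}$, which is immediate from the definition (or from reflexivity of $\preceqg$ together with Lemma~\ref{le:gDel3}~(b)). Since $U$ is open and contained in $\futg{U}$, and the interior is the largest open set inside $\futg{U}$, this gives $U\subset\Int(\futg{U})$.

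The crucial input is then Proposition~\ref{pro:gammaset}~(iv), which guarantees that the interior of a $\gamma$-set is again a $\gamma$-set. Thus $\Int(\futg{U})$ is a $\gamma$-set, and by the previous step it contains $U$. By the minimality built into the definition of $\futg{U}$ (the smallest $\gamma$-set containing $U$), we obtain $\futg{U}\subset\Int(\futg{U})$; the reverse inclusion is automatic, so $\futg{U}=\Int(\futg{U})$ is open. Combined with the fact that it is a $\gamma$-set, this shows $\futg{U}$ is $\gamma$-open.

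There is essentially no computational obstacle here; the argument rests entirely on two structural facts about $\gamma$-sets established earlier — closure under arbitrary intersection (used to see that $\futg{U}$ is a $\gamma$-set) and closure under passage to the interior (Proposition~\ref{pro:gammaset}~(iv)). The one point to be careful about is that one should \emph{not} try to invoke Proposition~\ref{pro:Iopen} directly: that statement describes $\futg{A}\setminus A$ for $A$ closed, whereas here $U$ is open, so the hypotheses do not match. The clean route is the minimality-plus-interior-stability argument above, which bypasses any such boundary analysis.
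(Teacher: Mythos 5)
Your proof is correct, but it takes a different route from the paper's. The paper's entire proof is the one-line identity $\futg{U}=\bigcup_{x\in U}(\futg{x}\setminus \{x\})\cup U$: it uses Lemma~\ref{le:gDel3}~(b) to write $\futg{U}$ as a union over points of $U$, and then \emph{does} invoke Proposition~\ref{pro:Iopen} --- not with $A=U$ as you warn against, but with $A=\{x\}$, a closed set, so each $\futg{x}\setminus\{x\}$ is open and $\futg{U}$ is a union of open sets. Your argument instead runs the minimality-plus-interior-stability scheme directly on $U$: since $U$ is open and $U\subset\futg{U}$, you get $U\subset\Int(\futg{U})$, and Proposition~\ref{pro:gammaset}~(iv) makes $\Int(\futg{U})$ a $\gamma$-set containing $U$, forcing $\futg{U}=\Int(\futg{U})$. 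This is in effect the same technique used to prove Proposition~\ref{pro:Iopen} itself (there one shows $\Int(\futg{A})\cup A$ is a $\gamma$-set), adapted to the open case, so your version is more self-contained and avoids the pointwise decomposition; the paper's version is shorter because it recycles the already-proved proposition. Both are complete; your only slightly off remark is the claim that Proposition~\ref{pro:Iopen} cannot be used here --- it can, via singletons.
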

\begin{proof}
One has $\futg{U}=\bigcup_{x\in U}(\futg{x}\setminus \{x\})\cup U$.
\end{proof}

\begin{definition}\banum
\item
Let $I=[0,1]$. 
If a function $c \colon I \to M$ is left (resp.\ right) differentiable, we denote its left (resp.\ right) differential by $c'_l$ (resp.\ $c'_r$).
\item
A path $c \colon I \to M$ is a continuous piecewise smooth map.
\item
A path $c$ is causal if $c'_l(t), c'_r(t) \in \gamma_{c(t)}$ for any $t \in I$ and is  strictly causal if $c'_l(t), c'_r(t) \in (\Int\gamma)_{c(t)}$ for any $t \in I$.
\eanum
\end{definition}

Note that if $c_1$ and $c_2$ are two causal (resp.\ strictly causal) paths on $I$ with $c_1(1)=c_2(0)$, the concatenation $c=c_1\cup c_2$ (defined by glueing the two paths as usual) is causal (resp.\ strictly causal).

The next result is obvious (see also~\cite{JS16}*{Lem.~1.30}).
\begin{lemma}\label{le:jslocal2}
Let $(M,\gamma)$ be a causal manifold and consider a constant cone $U \times \theta$ contained in $\gamma$.
Then, for $y, z \in U$ with $z-y \in \theta$, we have $z\in\futg{y}$.
\end{lemma}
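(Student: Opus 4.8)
The final statement is Lemma~\ref{le:jslocal2}: given a constant cone $U \times \theta \subset \gamma$ and two points $y, z \in U$ with $z - y \in \theta$, we want to conclude $z \in \futg{y}$, i.e.\ $z$ lies in the $\gamma$-future of $y$ (the smallest $\gamma$-set containing $y$).

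**Plan.** The natural strategy is to reduce the statement about the $\gamma$-future on the manifold to the already-understood linear/constant-cone situation, and then use the characterization \eqref{eq:strictNC3} together with the fact that $\futg{y}$ is the intersection of all $\gamma$-sets containing $y$. First I would invoke Remark~\ref{rem:constcone} to identify $U$ with its image in $\R^d$ and treat $\theta$ as a fixed closed convex proper cone with nonempty interior in $\R^d$. Since $\futg{y}$ is the smallest $\gamma$-set containing $y$, it suffices to show that every $\gamma$-set $A$ with $y \in A$ also contains $z$; equivalently, I must exhibit $z$ in the intersection of all such sets.

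**Main step.** The key tool is the defining property \eqref{eq:strictNC3}: a $\gamma$-set $A$ satisfies, for the given basis of constant cones $\{U_i \times \theta_i\}_i$, the absorption condition $U_i \cap (U_i \cap A + \Int\theta_i) \subset A$. Because $U \times \theta$ is itself a constant cone contained in $\gamma$, I can arrange (using that $\Int(\gamma_x) = \bigcup_{x \in U_i}\theta_i$ from Definition~\ref{def:ctcone}) for a chart element of the basis whose cone approximates $\theta$ from inside. The heart of the argument is that if $z - y \in \theta$, then I can connect $y$ to $z$ by a straight segment $c(t) = y + t(z-y)$, $t \in [0,1]$, lying in $U$ (shrinking to stay in the chart if necessary), whose velocity $z - y$ lies in $\theta$; perturbing slightly so the velocity lands in $\Int\theta$, this is a strictly causal path. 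For any $\gamma$-set $A$ containing $y$, local absorption along this path propagates membership: $y \in A$ and movement in the $\Int\theta$ direction keeps one in $A$ by \eqref{eq:strictNC3}. Iterating (or passing to the closure and using that $\gamma$-sets are closed under taking closure and interior, Proposition~\ref{pro:gammaset}~(iv)) yields $z \in A$.

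**Anticipated obstacle.** The subtle point — and where I expect the real work to be — is the boundary case $z - y \in \partial\theta$ rather than $\Int\theta$, since \eqref{eq:strictNC3} only directly propagates along $\Int\theta_i$ directions. I would handle this by an approximation argument: write $z$ as a limit of points $z_\epsilon = y + (z-y) + \epsilon v$ with $v \in \Int\theta$, so that $z_\epsilon - y \in \Int\theta$ and hence $z_\epsilon \in A$ for any $\gamma$-set $A \ni y$ by the interior argument above; then, since $\gamma$-sets need not be closed, I would instead argue that $z \in \ol{A}$ and invoke the fact (or reduce to the case) that $\futg{y}$ itself, being built from absorption, already contains such limit points — more precisely, I would show $z \in \futg{y}$ directly by noting $\futg{y}$ is a $\gamma$-set and applying the absorption property of $\futg{y}$ to the segment. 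A second minor technical care is ensuring the segment $[y,z]$ stays inside the chart domain $U$; since $U$ is the domain of a constant cone this may require shrinking $U$ or choosing the basis element appropriately, but it does not affect the conclusion because $z \in \futg{y}$ is a purely local-to-global statement and the intersection defining $\futg{y}$ is insensitive to such shrinking.
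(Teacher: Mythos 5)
The paper offers no proof of this lemma (it is declared ``obvious'', with a pointer to \cite{JS16}), so there is nothing to compare line by line; your strategy --- propagate membership in an arbitrary $\gamma$-set along the straight segment from $y$ to $z$ using the absorption property \eqref{eq:strictNC3} --- is certainly the intended argument, and it does work when the segment $[y,z]$ lies in $U$ and $z-y\in\Int\theta$. The difficulty is that the two points you set aside as ``minor technical care'' are precisely where the argument breaks, and neither can be repaired in the way you suggest. First, the segment: $U$ is not assumed convex, and shrinking $U$ does not help --- it can disconnect $y$ from $z$ or expel $z$ from the chart altogether. This is not cosmetic: Example~\ref{exa:jub} of the paper is a direct counterexample to the statement as written. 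There $U=M=\R^2\setminus Z$ is the domain of the constant cone $M\times\theta=\gamma$, and with $x=(-1,-1)$, $y=(1,1)$ one has $y-x\in\theta$ while $y\notin\futl{x}\supset\futg{x}$. The lemma needs $[y,z]\subset U$ (for instance $U$ convex), which is exactly what the paper arranges when it invokes constant cones in the proof of Proposition~\ref{pro:tbcle}.

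Second, the boundary case $z-y\in\partial\theta$. Your approximation $z_\epsilon\to z$ only shows that $z$ lies in the closure of every $\gamma$-set containing $y$, that is $z\in\futl{y}$, and $\gamma$-sets need not be closed; the proposed fix (``apply the absorption property of $\futg{y}$ to the segment'') is circular, since \eqref{eq:strictNC3} only propagates membership in directions lying in $\Int\theta_i\subset\Int\gamma$, never in boundary directions. In fact the conclusion genuinely fails here: take $M=\R^2$, $\gamma=M\times\theta$, $y=0$. The set $\Int\theta$ is $\gamma$-open, so by Proposition~\ref{pro:gammaset}~(vi) the set $\{0\}\cup\Int\theta$ is a $\gamma$-set; it contains $0$ but misses every $z\in\partial\theta\setminus\{0\}$, so $\futg{0}\subset\{0\}\cup\Int\theta$ and $z\notin\futg{0}$ although $z-0\in\theta$. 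The statement is only correct under the extra assumption that $z-y\in\Int\theta$, or that $\theta\setminus\{0\}\subset\Int\gamma$ over $U$ (as holds for members of a basis of constant cones, and as can be arranged in the one place the lemma is used, namely the proof of Lemma~\ref{le:tbcle0}, where $\theta$ is chosen around the velocity $c'_r(0)\in\Int\gamma$ of a strictly causal path). In short: the core idea is the right one, but the gaps you wave away are real, and closing them requires strengthening the hypotheses rather than adding care to the proof.
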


\begin{lemma}[see~\cite{JS16}*{Lem.~1.34}]\label{le:tbcle0}
Let $c\cl I\to M$ be a strictly causal path.
Then for $t_1\leq t_2$ with $ t_1,t_2\in I$ we have $c(t_1)\preceqg c(t_2)$.
\end{lemma}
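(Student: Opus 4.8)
The plan is to reduce the global statement to a purely local one along the path and then to chain the resulting local inequalities using the transitivity of $\preceqg$ established in Lemma~\ref{le:gDel3}~(a). Concretely, I would first isolate the following local claim: every $t_0\in I$ admits a relative neighborhood $J_{t_0}$ such that $c(s)\preceqg c(s')$ whenever $s,s'\in J_{t_0}$ and $s\leq s'$. Granting this, the family $\{J_{t_0}\}_{t_0\in[t_1,t_2]}$ is an open cover of the compact set $[t_1,t_2]$; choosing a subdivision $t_1=s_0\leq\cdots\leq s_N=t_2$ of mesh smaller than a Lebesgue number of the cover, each consecutive pair $s_{k-1}\leq s_k$ lies in a single $J_{t_0}$, so $c(s_{k-1})\preceqg c(s_k)$, and transitivity yields $c(t_1)\preceqg c(t_2)$.

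For the local claim, I would fix $t_0$ and exploit strict causality: the one-sided velocities $c'_l(t_0),c'_r(t_0)$ lie in $\Int(\gamma_{c(t_0)})$. I would then choose an open convex cone $\theta'$ containing both, with $\overline{\theta'}\subset\Int(\gamma_{c(t_0)})$, and set $\theta=\overline{\theta'}$, a closed convex proper cone with $c'_l(t_0),c'_r(t_0)\in\Int\theta$. By the construction in the proof that a causal manifold admits a basis of constant cones (Definition~\ref{def:ctcone}), there is a chart $(\phi,U)$ around $c(t_0)$ such that $(\phi,U,\theta)$ is a constant cone contained in $\gamma$. Using continuity of $c$ together with the continuity of the velocity on each smooth piece and the one-sided limits at a possible break point (which, after shrinking, I arrange to be the only break point in the window), I would pass to a neighborhood $J_{t_0}$ on which $c$ stays inside $U$ and every one-sided velocity, read in the chart, lies in $\theta$.

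With $J_{t_0}$ so chosen, for $s\leq s'$ in $J_{t_0}$ I would write in the chart $\phi(c(s'))-\phi(c(s))=\int_s^{s'}(\phi\circ c)'(\tau)\,d\tau$ (splitting the integral at the break point if present). The integrand is valued in the closed convex cone $\theta$, so the displacement lies in $\theta$, since Riemann sums are nonnegative linear combinations of vectors of $\theta$ and $\theta$ is closed and convex. Identifying $U$ with $\phi(U)$ as in Remark~\ref{rem:constcone}, Lemma~\ref{le:jslocal2} applied with $y=c(s)$ and $z=c(s')$ gives $c(s')\in\futg{c(s)}$, that is $c(s)\preceqg c(s')$, which establishes the local claim.

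The main technical obstacle will be the bookkeeping around the piecewise-smooth structure: arranging a single constant cone $\theta$ that simultaneously contains both one-sided velocities throughout a uniform neighborhood of $t_0$ and still fits inside $\gamma$ over a common chart domain $U$, and verifying that the ``integral of a $\theta$-valued velocity stays in $\theta$'' step survives across a break point. Everything else is the routine compactness-plus-transitivity packaging described above.
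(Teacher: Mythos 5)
Your proof is correct and follows essentially the same route as the paper: localize via a constant cone adapted to the one-sided velocities, apply Lemma~\ref{le:jslocal2}, and propagate along $[t_1,t_2]$ by transitivity of $\preceqg$. The only differences are in execution: the paper's local step uses just the first-order expansion of $c$ at $t_0$ (so it only proves $c(t_0)\preceqg c(t)$ for $t$ near $t_0$ and globalizes by a connectedness/supremum argument), whereas you establish the stronger uniform local claim by integrating a $\theta$-valued velocity, which lets you chain via a Lebesgue number instead.
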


\begin{proof}
It is enough to prove that for any $t_0\in I$, there exists $\alpha>0$ such that $c(t_0)\preceqg c(t)$ for $t\in(t_0,t_0+\alpha)$ and similarly $c(t)\preceqg c(t_0)$ for $t\in(t_0-\alpha,t_0)$.
We may assume $t_0=0$.
There exists a constant cone $U\times\theta$ contained in $\gamma$ and containing $(c(0),c'_r(0))$.
There exists $\alpha>0$ such that $c(t)-c(0)\in\theta$ for $t\in(0,\alpha)$.
By Lemma~\ref{le:jslocal2}, this implies $c(0)\preceqg c(t)$ for $t\in(0,\alpha)$.
The other case is similar, using $c'_l(0)$.
\end{proof}

\begin{proposition}[see~\cite{JS16}*{Lem.~1.35}]\label{pro:tbcle}
Let $A\subset M$.
One has $y\in\futg{A}$ if and only if $y\in A$ or there exists a strictly causal path $c\cl I\to M$ such that $c(0)\in A$, $c(1)=y$.
\end{proposition}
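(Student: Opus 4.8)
The plan is to show that the set
\[
B=A\cup\{y\in M;\ \text{there exists a strictly causal path }c\cl I\to M\text{ with }c(0)\in A,\ c(1)=y\}
\]
coincides with $\futg{A}$, which is exactly the assertion. By Definition~\ref{def:chronofut} together with Proposition~\ref{pro:gammaset}~(iii), $\futg{A}$ is the smallest $\gamma$-set containing $A$; and clearly $A\subset B$. Hence it suffices to establish two facts: (1) $B\subset\futg{A}$, and (2) $B$ is a $\gamma$-set. Indeed, (2) together with $A\subset B$ forces $\futg{A}\subset B$ by minimality, and combined with (1) this gives $B=\futg{A}$.

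For (1), let $y\in B$. If $y\in A$ there is nothing to prove, since $A\subset\futg{A}$. Otherwise there is a strictly causal path $c$ with $c(0)\in A$ and $c(1)=y$; Lemma~\ref{le:tbcle0} then gives $c(0)\preceqg c(1)$, that is $y\in\futg{c(0)}$, and Lemma~\ref{le:gDel3}~(b) yields $\futg{c(0)}\subset\futg{A}$. Hence $y\in\futg{A}$.

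For (2), I would use the criterion~\eqref{eq:strictNC3}: it is enough to exhibit one basis of constant cones $\{U_i\times\theta_i\}_i$ for $\gamma$ with $U_i\cap(U_i\cap B+\Int\theta_i)\subset B$ for every $i$. The construction in the lemma following Definition~\ref{def:ctcone} allows one to choose such a basis with every $U_i$ convex. Fix an index and write $U\times\theta$ for the corresponding cone. Given $z\in U\cap B$ and $v\in\Int\theta$ with $z+v\in U$, consider the straight segment $c_0(t)=z+tv$, $t\in I$. By convexity $c_0(t)\in U$ for all $t$, so $(c_0(t);v)\in U\times\Int\theta\subset\Int\gamma$, which shows that $c_0$ is a strictly causal path from $z$ to $z+v$. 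If $z\in A$, then $c_0$ already joins $A$ to $z+v$; if $z\in B\setminus A$, some strictly causal path $c_1$ joins $A$ to $z$, and the concatenation $c_1\cup c_0$ is again strictly causal and joins $A$ to $z+v$. In either case $z+v\in B$, proving the required inclusion and hence that $B$ is a $\gamma$-set.

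The main obstacle is step (2), and within it the need to run the local extension inside charts where the straight segment from $z$ to $z+v$ stays both in $U$ and strictly causal. This is why I insist on choosing a basis of constant cones with convex pieces: convexity keeps the segment in $U$, while the identity $\Int\gamma=\bigcup_iU_i\times\Int\theta_i$ keeps its velocity in the open cone, so that concatenating with a path already reaching $z$ preserves strict causality.
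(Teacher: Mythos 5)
Your proof is correct and follows essentially the same route as the paper: you introduce the same set $B$, prove $B\subset\futg{A}$ via Lemma~\ref{le:tbcle0}, and prove $B$ is a $\gamma$-set via the criterion~\eqref{eq:strictNC3} using straight segments in convex constant cones and concatenation. The only (harmless) difference is that you spell out the use of $\Int\theta$ and the convexity of the chart pieces slightly more explicitly than the paper does.
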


\begin{proof}
Let $B$ be the union of $A$ with the set of points that can be reached from $A$ by a strictly causal path.
We shall prove that $\futg{A}=B$.

\spa
(i) To prove that $B \supset \futg{A}$, it is enough to check that $B$ is a $\gamma$-set.
Choose a constant cone $U \times \theta$ contained in $\gamma$ with $U$ convex.
By~\eqref{eq:strictNC3}, it is enough to prove that $U\cap(B\cap U+\theta)\subset B$.
Let $y' \in B \cap U$ and $v' \in \theta$ with $y'+v' \in U$.
Since $U$ is convex, $c \colon I \to U, t \mapsto y'+tv'$ is a strictly causal path for $I$ a small enough neighborhood of $[0,1]$.
Since $y' \in B$, there exists a strictly causal path $\tilde{c}$ with $\tilde{c}(0)\in A$ and $\tilde{c}(1)=y'$.
Therefore, concatenating $\tilde{c}$ and $c$ proves that $y'+v' \in B$.

\spa
(ii) Let us prove that $B \subset \futg{A}$.
Let $y\in B, y\notin A$.
There exist $x\in A$ and a strictly causal curve $c$ going from $x$ to $y$.
Then $y\in \futg{x}$ by Lemma~\ref{le:tbcle0}.
Hence, $y\in\futg{A}$.
\end{proof}

\begin{corollary}[see~\cite{JS16}*{Lem.~1.41}]\label{cor:gDellambda}
One has $\pasog{y}=\futga{y}$.
\end{corollary}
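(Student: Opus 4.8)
The plan is to derive the identity from the path characterisation of Proposition~\ref{pro:tbcle}, applied at once to $\gamma$ and to its antipodal $\gamma^a$. First I would note that $(M,\gamma^a)$ is again a causal manifold in the sense of Definition~\ref{def:causal}~(a): the cone $\gamma^a$ is closed, convex and proper because $\gamma$ is, and since the antipodal map is a fibrewise linear isomorphism we have $\Int(\gamma^a)=(\Int\gamma)^a$ and $(\Int(\gamma^a))_x=(\Int\gamma_x)^a\neq\varnothing$ for every $x$. Hence Lemma~\ref{le:tbcle0} and Proposition~\ref{pro:tbcle} are equally valid for the $\gamma^a$-preorder, and $\futga{y}$ is exactly the set consisting of $y$ together with the points reachable from $y$ by a strictly $\gamma^a$-causal path.

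The key geometric ingredient is path reversal. For a path $c\colon I\to M$ I set $\tilde c(t)=c(1-t)$; this is again a piecewise smooth path, and a direct computation of the one-sided difference quotients gives
\begin{align*}
\tilde c'_r(t)&=-c'_l(1-t), & \tilde c'_l(t)&=-c'_r(1-t).
\end{align*}
Since $\Int(\gamma^a)=(\Int\gamma)^a$, these relations show that $c\mapsto\tilde c$ carries strictly $\gamma$-causal paths to strictly $\gamma^a$-causal paths (and back, the correspondence being involutive) while interchanging the endpoints: a strictly $\gamma$-causal path from $x$ to $y$ becomes a strictly $\gamma^a$-causal path from $y$ to $x$.

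Combining these two observations, I would chain the equivalences
\begin{align*}
x\in\pasog{y}
&\iff y\in\futg{x}\\
&\iff \text{$x=y$, or some strictly $\gamma$-causal path joins $x$ to $y$}\\
&\iff \text{$x=y$, or some strictly $\gamma^a$-causal path joins $y$ to $x$}\\
&\iff x\in\futga{y},
\end{align*}
where the first line is the definition of $\pasog{}$ recalled in~\eqref{eq:pasog}, the second and fourth lines are Proposition~\ref{pro:tbcle} for $\gamma$ and for $\gamma^a$ respectively, and the middle equivalence is the reversal correspondence above. This yields $\pasog{y}=\futga{y}$.

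I expect the only delicate point to be the differential bookkeeping under reversal, namely verifying the two displayed relations from the difference quotients and checking that they persist at the finitely many break points of the piecewise smooth path, together with the routine verification that $(M,\gamma^a)$ is causal; everything else is substitution into the path characterisation. A more economical variant would avoid paths altogether by identifying the transpose of the graph $\gPo$ of $\preceqg$ with the graph of the analogous preorder for $\gamma^a$, which amounts to the same antipodal symmetry, but the path argument seems the most transparent.
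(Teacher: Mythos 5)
Your proof is correct and is essentially the paper's argument: the paper's proof consists of the single sentence ``This follows immediately from Proposition~\ref{pro:tbcle}'', and what you have written out (applying the path characterisation to both $\gamma$ and $\gamma^a$ and reversing strictly causal paths) is precisely the verification that is being left implicit there. The auxiliary checks you flag --- that $(M,\gamma^a)$ is causal and that reversal exchanges $c'_l$ and $-c'_r$ --- are routine and correctly handled.
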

\begin{proof}
This follows immediately from Proposition~\ref{pro:tbcle}. 
\end{proof}

\subsubsection*{The $\lambda$-preorder}

Let $(M,\gamma)$ be a causal manifold. Recall that $\lambda=\gammc$.
\begin{definition}\label{def:futl}
\banum
\item
For $A\subset M$, we denote by $\futl{A}$ the intersection of all closed $\lambda^a$-sets which contain $A$:
\eqn
&&\futl{A}=\bigcap Z\mbox{ with  $A\subset Z$ and $Z$ is a closed $\lambda^a$-set.}
\eneqn
\item
For $x\in M$, we set $\futl{x}=\futl{\{x\}}$. 
\eanum
\end{definition}
In other words,  $\futl{A}$ is the closure of $A$ for the $\lambda$-topology, hence, for the $\gamma$-topology. In particular, $\futl{A}=\futl{\ol A}$. 
Let us set
\eqn
&&x\preceql y\mbox{ if and only if }y\in\futl{x}. 
\eneqn

\begin{lemma}\label{le:gDel4}
The relation $x\preceql y$ is a preorder. 
\end{lemma}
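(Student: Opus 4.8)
The plan is to show that the relation $x\preceql y$, defined by $y\in\futl{x}$, satisfies the two defining axioms of a preorder: reflexivity and transitivity. Reflexivity is immediate, since $x\in\futl{x}$ by the very definition of $\futl{x}$ as a closed set containing $x$ (see Definition~\ref{def:futl}). So the entire content of the lemma lies in transitivity.

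For transitivity, suppose $x\preceql y$ and $y\preceql z$, i.e.\ $y\in\futl{x}$ and $z\in\futl{y}$. The goal is to deduce $z\in\futl{x}$. The key structural fact I would exploit is that $\futl{x}$ is characterized as the \emph{smallest} closed $\lambda^a$-set containing $x$ (equivalently, the closure of $\{x\}$ for the $\gamma$-topology). The natural argument mirrors the proof of Lemma~\ref{le:gDel3}~(a) for the $\gamma$-preorder: since $\futl{x}$ is a closed $\lambda^a$-set containing $y$, and $\futl{y}$ is the smallest closed $\lambda^a$-set containing $y$, we must have $\futl{y}\subset\futl{x}$. Because $z\in\futl{y}$, it follows that $z\in\futl{x}$, which is exactly $x\preceql z$.

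The one point that requires a genuine check — and where I expect the main obstacle to lie — is the minimality assertion implicit above: that $\futl{y}$ really is contained in every closed $\lambda^a$-set containing $y$, so that $\futl{x}$ being such a set forces $\futl{y}\subset\futl{x}$. This is precisely the defining property of $\futl{\,\cdot\,}$ as the intersection of all closed $\lambda^a$-sets containing the given set, provided that this intersection is itself a closed $\lambda^a$-set (so that it is a legitimate member of the family over which we take the smallest element). This well-definedness is guaranteed by Proposition~\ref{pro:lambdaset}~(b): the family of $\lambda^a$-closed sets is closed under arbitrary intersections, so $\futl{y}$ is indeed a closed $\lambda^a$-set, hence the smallest one containing $y$. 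With this in hand the transitivity argument is purely formal, requiring no micro-local input beyond the stability already established.

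In short, I would first invoke reflexivity from the definition, then reduce transitivity to the monotonicity statement ``$y\in\futl{x}\implies\futl{y}\subset\futl{x}$,'' and finally justify that monotonicity by combining the closure-under-intersection property of $\lambda^a$-sets (Proposition~\ref{pro:lambdaset}~(b)) with the minimality built into Definition~\ref{def:futl}. The whole proof should be short, essentially identical in structure to Lemma~\ref{le:gDel3}~(a), with the only substantive ingredient being the earlier proposition ensuring $\futl{\,\cdot\,}$ is well defined as a closed $\lambda^a$-set.
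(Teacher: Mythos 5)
Your proof is correct and follows essentially the same route as the paper: transitivity reduces to the monotonicity $y\in\futl{x}\Rightarrow\futl{y}\subset\futl{x}$, which holds because $\futl{x}$ is itself a closed $\lambda^a$-set containing $y$. Your explicit appeal to Proposition~\ref{pro:lambdaset}~(b) to guarantee that $\futl{x}$ belongs to the family being intersected is a point the paper leaves implicit, but it is the right justification.
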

\begin{proof}
If $y\in\futl{x}$, then $\futl{y}\subset\futl{x}$. Indeed, $\futl{x}$ is a closed $\lambda^a$-set which contains $y$. Hence, it contains 
 $\futl{y}$.
\end{proof}

\begin{notation}
We call $\preceqg$ the $\gamma$-preorder and $\preceql$ the $\lambda$-preorder.
\end{notation}
According to~\eqref{eq:J+}, one has 
\eq\label{eq:pasol}
&&\ba{l}
\futog{x}=\futg{x}\\
\pasol{y}=\pasol{\{y\}}\eqdot\{x;  x\preceql y\}, \quad \pasol{A}=\bigcup_{y\in A}\pasol{y}.
\ea\eneq

\begin{proposition}\label{pro:futUpasZ}
For $A\subset M$, one has $\futl A=\ol{\futg A}$. 
In particular, for $x,y\in M$,  $\futl{x}=\ol{\futg{x}}$ and $x\preceqg y$ implies  $x\preceql y$. 
\end{proposition}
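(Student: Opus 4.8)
The plan is to reduce everything to a single identification of two families of closed sets, after which the equality $\futl A=\ol{\futg A}$ follows by a soft double-inclusion argument exploiting the minimality of $\futg A$.

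The key preliminary step is the following observation. The cone associated with the causal manifold $(M,\gamma^a)$ is $(\gamma^a)^{\circ a}=\gamma^\circ=\lambda^a$, since $(\gamma^a)^\circ=(\gamma^\circ)^a$ by a routine polarity computation. Applying Theorem~\ref{th:gammasets}~(b) to $(M,\gamma^a)$ therefore yields: a closed subset $Z\subset M$ is a $\gamma$-set if and only if it is a $\lambda^a$-set. In view of Definition~\ref{def:futl}, this lets me rewrite $\futl A$ as the intersection of all \emph{closed $\gamma$-sets} containing $A$.

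I would then prove the two inclusions. For $\futl A\subset\ol{\futg A}$: the set $\futg A$ is a $\gamma$-set by construction, so $\ol{\futg A}$ is again a $\gamma$-set by Proposition~\ref{pro:gammaset}~(iv); it is closed and contains $A$, hence it is one of the closed $\gamma$-sets entering the intersection that defines $\futl A$, giving $\futl A\subset\ol{\futg A}$. For the reverse inclusion: $\futl A$ is an intersection of closed $\gamma$-sets, so it is closed and, by Proposition~\ref{pro:gammaset}~(iii), it is itself a $\gamma$-set containing $A$. Since $\futg A$ is, by Definition~\ref{def:chronofut}, the smallest $\gamma$-set containing $A$, we get $\futg A\subset\futl A$, and as $\futl A$ is closed this upgrades to $\ol{\futg A}\subset\futl A$. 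Combining the two inclusions gives $\futl A=\ol{\futg A}$.

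The two ``in particular'' assertions are then immediate: taking $A=\{x\}$ gives $\futl x=\ol{\futg x}$, and if $x\preceqg y$, i.e.\ $y\in\futg x$, then $y\in\futg x\subset\ol{\futg x}=\futl x$, so $x\preceql y$. The only genuinely delicate point is the preliminary identification of closed $\gamma$-sets with closed $\lambda^a$-sets: one must be careful to invoke Theorem~\ref{th:gammasets}~(b) for the opposite cone $\gamma^a$ rather than for $\gamma$, and to keep the antipodal bookkeeping $(\gamma^a)^{\circ a}=\lambda^a$ straight. Once this is in place, the remainder of the argument is purely formal.
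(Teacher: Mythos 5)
Your proof is correct and follows essentially the same route as the paper: both directions are obtained by combining the minimality of $\futg A$ (resp.\ the definition of $\futl A$ as an intersection) with the dictionary between closed $\gamma$-sets and closed $\lambda^a$-sets provided by Theorem~\ref{th:gammasets}~(b) applied to $\gamma^a$, together with the stability properties of Proposition~\ref{pro:gammaset}. Your explicit care with the antipodal bookkeeping $(\gamma^a)^{\circ a}=\lambda^a$ is a welcome clarification of a step the paper leaves implicit.
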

\begin{proof}
(i) Let us prove the inclusion $\ol{\futg A}\subset \futl A$. 
The closed set $\futl A$ contains $A$ and  being a $\lambda^a$-set, it is a $\gamma$-set by Theorem~\ref{th:gammasets}. Therefore,
$\futg A\subset \futl A$ and  the result follows. 

\spa
(ii)  Let us prove the inclusion  $\futl A\subset \ol{\futg A}$. 
The closed set $\ol{\futg A}$ is a $\gamma$-set by Proposition~\ref{pro:gammaset}, hence a $\lambda^a$-set by Theorem~\ref{th:gammasets}. 
\end{proof}

\begin{corollary}
One has  $\ol{\bigcup_{x\in A}\futl x}= \futl A$.
\end{corollary}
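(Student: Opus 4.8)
I would treat this corollary as a short chaining-together of the identities already in hand, with no genuinely new geometric content. The key inputs are Proposition~\ref{pro:futUpasZ}, which simultaneously gives $\futl{x}=\ol{\futg{x}}$ for each point $x$ and $\futl{A}=\ol{\futg{A}}$ for the set $A$, together with Lemma~\ref{le:gDel3}~(b), which decomposes $\futg{A}=\bigcup_{x\in A}\futg{x}$.

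First I would rewrite the left-hand side of the asserted equality using $\futl{x}=\ol{\futg{x}}$, obtaining
\[
\ol{\bigcup_{x\in A}\futl{x}}=\ol{\bigcup_{x\in A}\ol{\futg{x}}}.
\]
Next I would invoke the elementary topological fact that, for any family $\{S_i\}_i$ of subsets of a topological space, one has $\ol{\bigcup_i\ol{S_i}}=\ol{\bigcup_i S_i}$. Indeed, the inclusion $\supset$ is immediate from $\bigcup_i S_i\subset\bigcup_i\ol{S_i}$, while for the reverse inclusion each $\ol{S_i}$ is contained in the closed set $\ol{\bigcup_j S_j}$, so the union of the closures, and hence its closure, is contained in $\ol{\bigcup_j S_j}$. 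Applying this with $S_x=\futg{x}$ yields $\ol{\bigcup_{x\in A}\ol{\futg{x}}}=\ol{\bigcup_{x\in A}\futg{x}}$.

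Finally I would use Lemma~\ref{le:gDel3}~(b) to replace $\bigcup_{x\in A}\futg{x}$ by $\futg{A}$, and then Proposition~\ref{pro:futUpasZ} once more to conclude $\ol{\futg{A}}=\futl{A}$. Combining the three displayed identities gives $\ol{\bigcup_{x\in A}\futl{x}}=\futl{A}$, as desired. I do not expect any real obstacle here: the only point requiring a moment's care is the general closure identity, but it is purely formal, so the whole argument reduces to a short computation. (One could equally argue directly: $\futl{x}=\ol{\futg{x}}\subset\ol{\futg{A}}=\futl{A}$ shows $\bigcup_{x\in A}\futl{x}\subset\futl{A}$, and since $\futl{A}$ is closed the nontrivial inclusion follows, the reverse one being clear from $\futg{A}=\bigcup_{x\in A}\futg{x}$.)
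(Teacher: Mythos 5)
Your argument is correct, but it takes a different route from the paper for the nontrivial inclusion. You reduce everything to the $\gamma$-future: writing $\futl{x}=\ol{\futg{x}}$, using the purely formal identity $\ol{\bigcup_i\ol{S_i}}=\ol{\bigcup_i S_i}$, and then invoking the decomposition $\futg{A}=\bigcup_{x\in A}\futg{x}$ of Lemma~\ref{le:gDel3}~(b) together with $\ol{\futg{A}}=\futl{A}$. The paper instead proves the two inclusions separately: the inclusion $\subset$ exactly as in your parenthetical remark, and the inclusion $\supset$ by showing that the left-hand side is a closed $\lambda^a$-set containing $A$ (each $\futl{x}$ is a $\gamma$-set, unions and closures of $\gamma$-sets are $\gamma$-sets by Proposition~\ref{pro:gammaset}, and closed $\gamma$-sets are $\lambda^a$-sets by Theorem~\ref{th:gammasets}), so that it must contain $\futl{A}$ by the minimality in Definition~\ref{def:futl}. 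Your version buys a shorter, purely computational proof that never re-enters the micro-support/$\lambda^a$-set picture, at the cost of leaning on Lemma~\ref{le:gDel3}~(b); the paper's version stays within the closure-operator formalism for $\lambda^a$-sets and illustrates once more the stability properties established earlier. Both are complete and valid.
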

\begin{proof}
(i) The inclusion $\futl{x}\subset\futl{A}$ implies the inclusion $\subset$ in the statement. 

\spa
(ii) The left-hand side is a $\gamma$-set thanks to Propositions~\ref{pro:gammaset} and~\ref{pro:futUpasZ}. It is thus a $\lambda^a$-set by Theorem~\ref{th:gammasets}. Therefore, it contains $\futl{A}$ by Definition~\ref{def:futl}.
\end{proof}
In the sequel, we set $\Delta_\gamma=\Delta_{\preceqg}$ and  $\Delta_\lambda=\Delta_{\preceql}$.

\begin{corollary}
One has  
\eqn
&&\gDel= I^+_{\gamma^a \times \gamma}(\Delta)\subset\lDel\subset  L^+_{\lambda^a \times \lambda}(\Delta)=\ol{\gDel}.
\eneqn
\end{corollary}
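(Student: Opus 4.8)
The plan is to view $M\times M$ as a single causal manifold and reduce the whole chain to one equality. Equip $M\times M$ with the cone $\gamma^a\times\gamma\subset T(M\times M)=TM\times TM$; this is again a closed convex proper cone with the causal property, and a one-line polar computation gives $(\gamma^a\times\gamma)^\circ=\lambda\times\lambda^a$, hence $(\gamma^a\times\gamma)^{\circ a}=\lambda^a\times\lambda$. Thus $I^+_{\gamma^a\times\gamma}$ and $L^+_{\lambda^a\times\lambda}$ are nothing but the $\gamma$-future and the $\lambda$-future operators attached to the single causal manifold $(M\times M,\gamma^a\times\gamma)$. Granting this, Proposition~\ref{pro:futUpasZ} applied on $M\times M$ yields $L^+_{\lambda^a\times\lambda}(\Delta)=\ol{I^+_{\gamma^a\times\gamma}(\Delta)}$, so the rightmost equality reduces to the leftmost one, $I^+_{\gamma^a\times\gamma}(\Delta)=\gDel$. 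The two middle inclusions are then formal: $\gDel\subset\lDel$ is the implication $x\preceqg y\Rightarrow x\preceql y$ of Proposition~\ref{pro:futUpasZ}, and, using~\eqref{eq:deltag} and $\futl{x}=\ol{\futg{x}}$, one has $\lDel=\bigcup_x\{x\}\times\futl{x}=\bigcup_x\{x\}\times\ol{\futg{x}}=\bigcup_x\ol{\{x\}\times\futg{x}}\subset\ol{\gDel}$.

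It remains to prove $I^+_{\gamma^a\times\gamma}(\Delta)=\gDel$, by double inclusion. For $\subset$ it suffices to check that $\gDel$ is a $(\gamma^a\times\gamma)$-set containing $\Delta$; reflexivity gives $\Delta\subset\gDel$, and the set property I would verify through~\eqref{eq:strictNC3}, taking the product basis of constant cones $\{(U_i\times U_j)\times(\theta_i^a\times\theta_j)\}_{i,j}$ coming from a basis $\{U_i\times\theta_i\}_i$ of $\gamma$. The condition then asks, for $(x,y)\in\gDel$ with $x\in U_i$, $y\in U_j$ and $(v,w)\in\Int\theta_i^a\times\Int\theta_j$ such that $(x+v,y+w)\in U_i\times U_j$, that $x+v\preceqg y+w$. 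Applying Lemma~\ref{le:jslocal2} to $\gamma^a$ (with the constant cone $U_i\times\theta_i^a$) gives $x+v\in\futga{x}=\pasog{x}$, the equality by Corollary~\ref{cor:gDellambda}, i.e.\ $x+v\preceqg x$; likewise $y\preceqg y+w$. Chaining $x+v\preceqg x\preceqg y\preceqg y+w$ and invoking transitivity (Lemma~\ref{le:gDel3}) gives $x+v\preceqg y+w$, as required.

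For the reverse inclusion $\gDel\subset I^+_{\gamma^a\times\gamma}(\Delta)$ I would use the path description of the future (Proposition~\ref{pro:tbcle}) on $M\times M$. Let $(x,y)\in\gDel$; if $x=y$ it lies in $\Delta$, so assume $y\in\futg{x}$ with $y\neq x$ and choose a strictly causal $\gamma$-path $c\cl I\to M$ from $x$ to $y$. Fix any $s\in(0,1)$ and set $z=c(s)$: reversing $c|_{[0,s]}$ negates the velocity (up to a positive factor) and so produces a strictly causal $\gamma^a$-path from $z$ to $x$, while $c|_{[s,1]}$ is a strictly causal $\gamma$-path from $z$ to $y$. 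Pairing the two yields a strictly causal $(\gamma^a\times\gamma)$-path from the diagonal point $(z,z)$ to $(x,y)$, whence $(x,y)\in I^+_{\gamma^a\times\gamma}(\Delta)$ by Proposition~\ref{pro:tbcle}.

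The main obstacle is exactly this equality $I^+_{\gamma^a\times\gamma}(\Delta)=\gDel$, and the point to be careful about is resisting the natural but \emph{false} shortcut that the future of a diagonal point factors as a product of futures: in general $I^+_{\gamma^a\times\gamma}((x,x))$ is strictly smaller than $\futga{x}\times\futg{x}$, omitting most of the cone boundary, and those boundary pieces are recovered only after the union over all diagonal points. The transitivity step in the first inclusion and the device of starting the path at an interior point $z=c(s)$ in the second are precisely what encode this phenomenon; everything else is formal once the polar identity $(\gamma^a\times\gamma)^{\circ a}=\lambda^a\times\lambda$ is recorded.
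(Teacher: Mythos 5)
Your proof is correct and follows essentially the same route as the paper: the equality $I^+_{\gamma^a\times\gamma}(\Delta)=\gDel$ is obtained by the same doubled-path construction in one direction and by showing that $\gDel$ is a $(\gamma^a\times\gamma)$-set in the other, and the remaining inclusions together with the identity $L^+_{\lambda^a\times\lambda}(\Delta)=\ol{I^+_{\gamma^a\times\gamma}(\Delta)}$ are derived from Proposition~\ref{pro:futUpasZ} exactly as in the text. The only variation is in the $(\gamma^a\times\gamma)$-set step: the paper uses the decomposition $\gDel=\bigcup_x\futga{x}\times\futg{x}$ together with the product-of-$\gamma$-sets result of \cite{JS16} and closure under unions, whereas you verify the constant-cone criterion~\eqref{eq:strictNC3} directly via Lemma~\ref{le:jslocal2}, Corollary~\ref{cor:gDellambda} and transitivity of $\preceqg$ --- a slightly more self-contained but equivalent argument.
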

\begin{proof}
(i)  $I^+_{\gamma^a \times \gamma}(\Delta)\subset \gDel$. Since $\gDel$ contains the diagonal, il is enough to prove that it is a $(\gamma^a \times \gamma)$-set.
Each $\futga{x} \times \futg{y}$ is a $(\gamma^a \times \gamma)$-set by~\cite{JS16}*{Prop.~1.19~(iii)}. By using~\eqref{eq:deltag} we get the result by  Proposition~\ref{pro:gammaset}.

\spa
(ii)  $\gDel\subset I^+_{\gamma^a \times \gamma}(\Delta)$. Let $(x,y)\in \gDel$, that is $x\preceqg y$. By Proposition~\ref{pro:tbcle}, there exists a strictly causal path 
$c\cl I\to M$ with $c(0)=x$ and $c(1)=y$. Consider the path $\tw c=(c_1,c_2)\cl I\to M\times M$ given by $c_1(t)=c(1-t)$ and $c_2(t)=c(t)$. 
Then $\tw c$ is a strictly causal path in $(M\times M,\gamma^a\times\gamma)$. Since $\tw c(1/2,1/2)\in\Delta$, we get by 
Proposition~\ref{pro:tbcle} that $(x,y)\in   I^+_{\gamma^a \times \gamma}(\Delta)$. 

\spa
(iii) $\gDel\subset \lDel$ by Proposition~\ref{pro:futUpasZ}. 

\spa
(iv) $ L^+_{\lambda^a \times \lambda}(\Delta)=\ol{ I^+_{\gamma^a \times \gamma}(\Delta)}$ by Proposition~\ref{pro:futUpasZ}.
Hence, $ L^+_{\lambda^a \times \lambda}(\Delta)=\ol{\gDel}$ by (i)--(ii).

\spa
(v) $\lDel\subset  L^+_{\lambda^a \times \lambda}$. Indeed, 
 $\lDel=\bigcup_x(\{x\}\times\futl{x})$ by~\eqref{eq:deltag}. Since $\{x\}\times\futl{x}=\ol{\{x\}\times\futg{x}}$, we get  $\{x\}\times\futl{x}\subset\ol{\gDel}$.
\end{proof}

\begin{remark}
One has $y\in\futg{x}\Leftrightarrow x\in\futga{y}$
and  $y\in\futl{x}\Leftrightarrow y\in\ol{\futg{x}}$. However, 
the equivalence $y\in\ol{\futg{x}}\Leftrightarrow x\in\ol{ \futga{y}}$ is false, as shown by the following example well-known to specialists of the field.
\end{remark}

\begin{example}\label{exa:jub}
Let  $(x_1,x_2)$ be the coordinates on $\R^2$, $Z=\{(x_1,x_2);x_1\leq0, x_2=0\}$, $M=\R^2\setminus Z$.
 Let $\theta\subset \R^2$ be the closed cone $\theta=\{(x_1,x_2);x_2\geq \vert x_1\vert\}$ and let 
 $\gamma=M\times\theta\subset TM$. Now let $x=(-1,-1)\in M$, $y=(1,1)\in M$. Then 
 \eqn
 &&x\in\futla{y}\mbox{ but }y\notin\futl{x}.
 \eneqn
 To check this point, use Propositions~\ref{pro:futUpasZ} and~\ref{pro:tbcle}.
\end{example}

\subsubsection*{Future time functions}
Definition~\ref{def:Gcausal} is a variation on~\cite{JS16}*{Def.~1.50}. 
The terminology G-causal below is inspired by the name of Geroch.

\begin{definition}\label{def:Gcausal}
\banum
\item
A future time  function on a causal manifold $(M,\gamma)$ is a surjective and submersive causal morphism $\tim \cl (M,\gamma) \to (\R,+)$ which is proper on the sets $\futl{K}$    for all compact subsets  $K$ of $M$.
\item
A  future time  function $\tim$ is strict if for any  $x_0\in M$,  setting $t_0=\tim(x_0)$, 
the family $\{\futg{U}\cap\opb{\tim}(t_0-a,t_0+a)\}_{U,a}$ is a neighborhood system of $x_0$ when $U$ ranges over 
a neighborhood system of $x_0$ and $a\in\R_{>0}$.  
\item
A  $G^+$-causal manifold $(M,\gamma,\tim)$ is a causal manifold endowed with a strict future time  function $\tim$.
\item
A morphism of $G^+$-causal manifolds is a morphism of the underlying causal manifolds. 
\eanum
\end{definition}

A time function is a function $\tim$ which is a future time function both for $\gamma$ and $\gamma^a$.
\begin{remark}\label{rem:loctime}
Let $(U,\theta)$ be a constant cone contained in $\gamma$ (recall Definition~\ref{def:ctcone}). Then one easily constructs a future time function $\tim$ with respect to the causal manifold $(U,\theta)$ and $\tim$ will be a future time function for 
$(U,\gamma\vert_U)$. Hence, a causal manifold is always locally a 
 $G^+$-causal manifold.
 \end{remark}

We do not recall here the definition of a globally hyperbolic  spacetime and its links with that of G-causal manifold.
It is known, after the works (among others) of~\cites{Ge70, MS08, FS11} that a globally hyperbolic manifold admits  time functions.

\begin{lemma}\label{lem:timorder}
Let  $(M,\gamma)$  be a causal manifold and $\tim$ a future time function. Then 
\banum
\item
the function $\tim$ is strictly causal,
\item
 $x\preceqg y$ implies $\tim(x)\leq\tim (y)$.
 \eanum
\end{lemma}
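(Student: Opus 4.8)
The plan is to treat the two assertions separately, noting that (a) is an infinitesimal, fiberwise statement, whereas (b) is global and will rest on the path characterization of $\futg{\cdot}$ provided by Proposition~\ref{pro:tbcle}. I read ``$\tim$ is strictly causal'' as the strict analogue of the defining condition of a causal morphism, namely that $T\tim$ sends $\Int(\gamma)$ into $\Int(\gamma_{(\R,+)})=\{(t;v);v>0\}$; equivalently, $\langle d\tim_x,\xi\rangle>0$ for every $\xi\in\Int(\gamma_x)$ and every $x\in M$.

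For (a), I would work in a single fiber $T_xM$ and argue by pure linear algebra. Set $\ell\eqdot d\tim_x\cl T_xM\to\R$. Since $\tim\cl(M,\gamma)\to(\R,+)$ is a causal morphism, $T\tim(\gamma)\subset\gamma_{(\R,+)}$ gives $\ell(\gamma_x)\subset[0,\infty)$; since $\tim$ is submersive, $\ell\neq0$. I would then argue by contradiction: if $\ell(\xi_0)=0$ for some $\xi_0\in\Int(\gamma_x)$, then a whole ball $B_\epsilon(\xi_0)$ lies in $\gamma_x$, so $\ell\geq0$ on $B_\epsilon(\xi_0)$. Writing $\xi_0\pm\epsilon'w\in\gamma_x$ for an arbitrary direction $w$ and small $\epsilon'>0$ yields $\pm\epsilon'\ell(w)\geq0$, hence $\ell(w)=0$ for every $w$, i.e.\ $\ell=0$, contradicting submersivity. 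Therefore $\ell(\Int\gamma_x)\subset(0,\infty)$, which is (a). The only hypotheses used are that $\gamma_x$ has nonempty interior (part of the definition of a causal manifold) together with the causal and submersive properties of $\tim$; neither properness nor surjectivity enters here.

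For (b), suppose $x\preceqg y$, i.e.\ $y\in\futg{x}$. By Proposition~\ref{pro:tbcle}, either $y=x$, in which case $\tim(x)=\tim(y)$, or there is a strictly causal path $c\cl I\to M$ with $c(0)=x$ and $c(1)=y$. Set $g=\tim\circ c\cl I\to\R$; it is continuous and piecewise smooth, and by the chain rule its one-sided derivatives are $g'_r(t)=\langle d\tim_{c(t)},c'_r(t)\rangle$ and $g'_l(t)=\langle d\tim_{c(t)},c'_l(t)\rangle$. Since $c$ is strictly causal, $c'_r(t),c'_l(t)\in\Int(\gamma_{c(t)})$, so part (a) gives $g'_r(t),g'_l(t)>0$ (the bare causal-morphism property already gives $\geq0$, which is all that is needed). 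A continuous function on $[0,1]$ whose one-sided derivatives are everywhere nonnegative is nondecreasing, whence $g(0)\leq g(1)$, that is $\tim(x)\leq\tim(y)$.

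The computations are routine; the only delicate points are the elementary lemma in (a) that a linear form nonnegative on a neighborhood of one of its zeros vanishes identically — this is where the nonempty interior of $\gamma_x$ and the submersivity of $\tim$ are genuinely used — and, in (b), the standard fact that a continuous function with everywhere nonnegative right derivative is nondecreasing. I expect the main conceptual obstacle to be none beyond fixing the correct reading of ``strictly causal''; once that is pinned down, both parts are short.
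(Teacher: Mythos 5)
Your proof is correct, but it is worth noting that the paper itself does not actually prove this lemma: its ``proof'' consists of the two citations \cite{JS16}*{Prop.~1.9} and \cite{JS16}*{Lem.~1.46}. What you have written is therefore a genuinely self-contained replacement. Your reading of ``strictly causal'' as $T\tim(\Int\gamma)\subset\{(t;v);v>0\}$, i.e.\ $\langle d\tim_x,\xi\rangle>0$ for $\xi\in\Int(\gamma_x)$, is the intended one, and your fiberwise argument for (a) is sound: the linear form $\ell=d\tim_x$ is nonnegative on $\gamma_x$ by causality, nonzero by submersivity, and a linear form vanishing at an interior point of a set on which it is nonnegative must vanish identically on the ambient space, which uses exactly $\Int(\gamma_x)\neq\varnothing$ from the definition of a causal manifold. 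For (b), reducing to Proposition~\ref{pro:tbcle} and then to the monotonicity of $\tim\circ c$ along a strictly causal path is the natural route, and the one-sided-derivative argument is standard (on each smooth piece $g'\geq0$ already suffices). The only point you should make explicit if this were to be written up is that the meaning of ``strictly causal'' for a \emph{morphism} is nowhere defined in this paper (only for paths), so your proof in effect supplies the missing definition as well; with that convention fixed, both parts are complete, and your version has the advantage over the paper's of using only ingredients already established here (Definition~\ref{def:causal}, Definition~\ref{def:Gcausal} and Proposition~\ref{pro:tbcle}) rather than deferring to external results.
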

\begin{proof}
(a) follows from~\cite{JS16}*{Prop.~1.9} and (b) from loc.\ cit.\ Lemma~1.46.
\end{proof}

\section{An  equivalence}

\subsubsection*{Micro-support of inverse images}

Let $(M,\gamma)$ be a causal manifold  and consider a constant cone $(U,\theta)$ contained in $\gamma$. 
We denote by $\Ug$ the set $U$ endowed with the topology induced by $\Mg$. We write $U_\theta$ instead of 
$U_{U\times\theta}$ and $\phit$ instead of $\phi_{U\times\theta}$. 

We get the commutative diagram of topological spaces:
\eq\label{diag:ctcone}
&&\ba{l}
\xymatrix{
U\ar[r]^-{\phit}\ar[d]_-{i_U}&U_\theta\ar[r]^-\rho&\Ug\ar[d]_-{i_{\Ug}}\\
M\ar[rr]^-{\phig}&&M_\gamma.
}\ea\eneq
 Thanks to Lemma~\ref{le:opbphig}, it gives rise to the commutative diagram
\eq\label{diag:thetag}
&&\ba{l}\xymatrix{
\Derb(\cor_M)\ar[d]_-{\opb{i_U}}
&&&\Derb(\cor_{\Mg})\ar[d]^-{\opb{i_\Ug}}\ar[lll]^-{\opb{\phig}}\\
\Derb(\cor_U)&\ar[l]^-\iota
      \Derb_{U\times\theta^{\circa}}(\cor_U)
                &\Derb(\cor_{\Ut})\ar[l]^-{\opb{\phit}}&\Derb(\cor_{\Ug})\ar[l]^-{\opb{\rho}}
}\ea\eneq

\begin{proposition}\label{pro:main2}
 Let $(M,\gamma)$ be a causal manifold. 
The  functor $\opb{\phig}\cl\Derb(\cor_{M_\gamma})\to\Derb(\cor_M)$ takes its values in $\Derb_{\gammc}(\cor_M)$.
\end{proposition}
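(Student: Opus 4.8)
The plan is to reduce the statement to the linear computation of Lemma~\ref{le:opbphig} by exploiting that the micro-support is a local and fibrewise invariant, together with a basis of constant cones for $\gamma$. Fix $G\in\Derb(\cor_{M_\gamma})$; I want to show $\SSi(\opb{\phig}G)\subset\lambda$. I will use two standard facts. First, $\SSi$ is local on $M$: for an open subset $U$ one has $\SSi(\opb{i_U}F)=\SSi(F)\cap\opb{\pi_M}(U)$. Second, if $\{U_i\times\theta_i\}_i$ is a basis of constant cones, then $\Int(\gamma_{x_0})=\bigcup_{x_0\in U_i}\theta_i$ by Definition~\ref{def:ctcone}. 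Taking polar cones (the polar of a union being the intersection of the polars, and the polar of $\gamma_{x_0}$ coinciding with that of its interior since $\gamma_{x_0}$ has non-empty interior) and then applying the antipodal map gives the fibrewise identity
\[
\lambda_{x_0}=\gamma_{x_0}^{\circ a}=\bigcap_{x_0\in U_i}\theta_i^{\circ a}.
\]
Hence it suffices to prove, for each $x_0\in M$ and each index $i$ with $x_0\in U_i$, that $\SSi(\opb{\phig}G)_{x_0}\subset\theta_i^{\circ a}$.

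So I would fix such an $x_0$ and $i$, write $(U,\theta)=(U_i,\theta_i)$, and shrink $U$ inside its chart to a convex open neighborhood of $x_0$ (still denoted $U$), so that $(U,\theta)$ remains a constant cone with $U$ convex and Lemma~\ref{le:opbphig} applies. The commutativity of Diagram~\eqref{diag:thetag} then gives
\[
\opb{i_U}\,\opb{\phig}G\;\simeq\;\iota\,\opb{\phit}\,\opb{\rho}\,\opb{i_{\Ug}}G.
\]
By Lemma~\ref{le:opbphig} the functor $\opb{\phit}$ takes its values in $\Derb_{U\times\theta^{\circ a}}(\cor_U)$, so $\opb{i_U}\opb{\phig}G\in\Derb_{U\times\theta^{\circ a}}(\cor_U)$, that is $\SSi(\opb{i_U}\opb{\phig}G)\subset U\times\theta^{\circ a}$. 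By the locality of the micro-support recalled above, $\SSi(\opb{\phig}G)\cap\opb{\pi_M}(U)\subset U\times\theta^{\circ a}$; restricting to the fibre over $x_0$ gives $\SSi(\opb{\phig}G)_{x_0}\subset\theta^{\circ a}=\theta_i^{\circ a}$.

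Intersecting this inclusion over all $i$ with $x_0\in U_i$ and using the fibrewise identity above yields $\SSi(\opb{\phig}G)_{x_0}\subset\bigcap_{x_0\in U_i}\theta_i^{\circ a}=\lambda_{x_0}$. As $x_0$ is arbitrary, $\SSi(\opb{\phig}G)\subset\lambda=\gammc$, which is the claim. The only genuinely technical points are the passage to convex charts, needed to invoke the linear Lemma~\ref{le:opbphig}, and the fibrewise polar computation expressing $\lambda_{x_0}$ as an intersection; everything else is a formal consequence of the commutative diagram~\eqref{diag:thetag} and the locality of $\SSi$, so I do not expect any serious obstacle here.
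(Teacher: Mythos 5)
Your proposal is correct and follows essentially the same route as the paper: restrict to a convex constant cone $(U,\theta)$, use the commutative diagram~\eqref{diag:thetag} together with Lemma~\ref{le:opbphig} to get $\SSi(\opb{\phig}G\vert_U)\subset U\times\theta^{\circ a}$, and then conclude fibrewise from $\gammc_{x_0}=\bigcap_i\theta_i^{\circ a}$ for a basis of constant cones. Your write-up is in fact slightly more careful than the paper's (which contains the slip ``$(x_0;\xi_0)\in\SSi(G)$'' where $\SSi(\opb{\phig}G)$ is meant), since you spell out the polar-cone computation and the shrinking of $U$ to a convex chart.
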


\begin{proof}
The problem is local on $M$ and we may assume that $M$ is open in a vector space.
 Consider a constant cone $(U,\theta)$ contained in $\gamma$ with $U$ convex so that we may 
  apply Lemma~\ref{le:opbphig}.
Then 
\eq\label{eq:gammarho}
&&\opb{i_U}\circ\opb{\phig}\simeq\iota\circ\opb{\phit}\circ \opb{\rho}\circ\opb{i_{\Ug}}
\eneq
It follows that for $G\in\Derb(\cor_{M_\gamma})$, $\SSi(\opb{\phig}G\vert_U)\subset U\times \theta^{\circa}$.

Let $(x_0;\xi_0)\in \SSi(G)$.  There exists a family of  constant cones $\{(U_i\times\theta_i)\}_i$ such that 
$\Int\gamma_{x_0}=\bigcup_i\theta_i$. Hence, $\gammc_{x_0}=\bigcap_i \theta_i^{\circ a}$ in $T^*_{x_0}M$. 
Since $(x_0;\xi_0)\in \{x_0\}\times\theta_i^{\circ a}$ for all $i$, we get  that $(x_0;\xi_0)\in \gammc_{x_0}$.
\end{proof}

 \subsubsection*{Full faithfulness of  direct image}
 
Recall that for  $x_0\in M$, the set $\futl{x_0}$ is $\lambda^a$-closed hence is closed for the $\gamma$-topology, 

 \begin{lemma}\label{lem:germoim1}
 Let $(M,\gamma,\tim)$ be  a $G^+$-causal manifold and let $F\in \Derb_{\gammc}(\cor_M)$.
 Let $x_0\in M$, $t_0=\tim(x_0)$. Let $a>0$ and let $U$ be an open relatively compact neighborhood of $x_0$ such that 
 $\tim(U)\subset(t_0-a,t_0+a)$. 
 Then $\rsect(\futg{U};F) \isoto \rsect(\futg{U}\cap \opb{\tim}(t_0-a,t_0+a);F)$.
 \end{lemma}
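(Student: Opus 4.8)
The plan is to realise the inclusion $\futg{U}\cap\opb{\tim}(t_0-a,t_0+a)\hookrightarrow\futg{U}$ as the bottom member of a one–parameter family of open sets obtained by cutting $\futg{U}$ with the sublevel sets of $\tim$, and to apply the non-characteristic deformation lemma \cite{KS90}*{Prop.~2.7.2}. Set $W=\futg{U}$, which is $\gamma$-open by Corollary~\ref{cor:gopen}, and for $s\in\R$ put $U_s=W\cap\opb{\tim}((-\infty,s))$. First I record two elementary facts. Since $U$ is relatively compact, $K\eqdot\ol U$ is compact and $\ol W=\futl{U}\subset\futl{K}$ by Proposition~\ref{pro:futUpasZ}; by Definition~\ref{def:Gcausal}~(a) the function $\tim$ is proper on $\futl{K}$. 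Next, if $y\in W$ then $y\in\futg{x}$ for some $x\in U$, so $x\preceqg y$ and hence $\tim(x)\leq\tim(y)$ by Lemma~\ref{lem:timorder}~(b); as $\tim(x)>t_0-a$ this gives $W\subset\opb{\tim}((t_0-a,+\infty))$, whence $U_{t_0+a}=W\cap\opb{\tim}(t_0-a,t_0+a)$ is exactly the right-hand side of the statement.

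Then I verify the hypotheses of the deformation lemma for the family $\{U_s\}_{s\in\R}$. The condition $U_s=\bigcup_{t<s}U_t$ is immediate from $\opb{\tim}((-\infty,s))=\bigcup_{t<s}\opb{\tim}((-\infty,t))$, and $\bigcup_sU_s=W$ since $\tim$ is real valued; moreover $U_s=\varnothing$ for $s\leq t_0-a$. For the required relative compactness, for $s'<s$ one has $\ol{U_s\setminus U_{s'}}\subset\ol W\cap\opb{\tim}([s',s])\subset\futl{K}\cap\opb{\tim}([s',s])$, and the latter is compact because $\tim$ is proper on the closed set $\futl{K}$; this is the step where the defining property of a future time function and the relative compactness of $U$ are used.

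The microlocal hypothesis is where the assumption $F\in\Derb_{\gammc}(\cor_M)$ enters. As $\tim\cl(M,\gamma)\to(\R,+)$ is a causal submersion, for every $x\in M$ one has $T\tim(\gamma_x)\subset\{v\geq0\}$, that is $d\tim(x)\in\gamma_x^\circ=\lambda^a_x$, and $d\tim(x)\neq0$ since $\tim$ is submersive. Because $\SSi(F)\subset\lambda$ and $\lambda\cap\lambda^a=T^*_MM$ by~\eqref{eq:good}, the ray $\R_{>0}\,d\tim(x)$ meets $\SSi(F)$ only along the zero-section; that is, $(x;d\tim(x))\notin\SSi(F)$ for all $x$. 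Thus the hypersurfaces $\opb{\tim}(s)$, which are the only parts of the boundary that move when $s$ varies, are non-characteristic for $F$, so the deformation is non-characteristic; the remaining (``lateral'') boundary $\partial\futg{U}$ is common to all the $U_s$ and imposes no condition. The deformation lemma then yields $\rsect(W;F)\isoto\rsect(U_{t_0+a};F)$, which is the assertion. The main obstacle is organising these verifications correctly—particularly guaranteeing the relative compactness of the annuli $\ol{U_s\setminus U_{s'}}$, for which both the properness of $\tim$ on $\futl{\ol U}$ and the compactness of $\ol U$ are essential—while the non-characteristicity itself reduces cleanly to the transversality $\lambda\cap\lambda^a=T^*_MM$.
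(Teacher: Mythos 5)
Your strategy---sweeping $W=\futg{U}$ by the open sets $U_s=W\cap\opb{\tim}((-\infty,s))$ and invoking the non-characteristic deformation lemma---is genuinely different from the paper's, and several of your verifications are correct: the identification $U_{t_0+a}=\futg{U}\cap\opb{\tim}(t_0-a,t_0+a)$ via Lemma~\ref{lem:timorder}, the compactness of $\ol{U_s\setminus U_{s'}}$ from the properness of $\tim$ on $\futl{\ol{U}}$, and the fact that $(x;d\tim(x))\notin\SSi(F)$ because $d\tim(x)\in\gamma_x^\circ\setminus\{0\}$ while $\SSi(F)\subset\lambda$ and $\lambda\cap\lambda^a=T^*_MM$. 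The gap is in the application of \cite{KS90}*{Prop.~2.7.2}: its third hypothesis requires the vanishing of $(\rsect_{M\setminus U_s}F)_x$ at every point of $Z_s=\bigcap_{t>s}\ol{U_t\setminus U_s}$, and $Z_s$ lies in $\ol{W}$, not in $W$; your claim that the lateral boundary ``imposes no condition'' is therefore unjustified. That the hypotheses genuinely fail for your family is already visible at the bottom: at $s_*=\inf_W\tim$ one has $U_{s_*}=\varnothing$ while $Z_{s_*}\neq\varnothing$ (by properness of $\tim$ on $\futl{\ol{U}}$), so the required vanishing reads $F_x=0$ and fails even for $F=\cor_M$; consistently, the lemma's conclusion ``for all $s$'' would give the absurd $\rsect(W;F)\isoto\rsect(\varnothing;F)=0$. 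The bottom can be repaired by replacing $U_s$ with $U_{\max(s,t_0+a)}$, but for $s\geq t_0+a$ the set $Z_s$ still meets $\partial W$, and at such a point $x$ (necessarily $\tim(x)=s$) one must prove $(\rsect_{M\setminus U_s}F)_x\simeq0$, where near $x$ the closed set $M\setminus U_s$ is the union of $M\setminus W$ and $\opb{\tim}([s,+\infty))$. This does not follow from the non-characteristicity of the level sets of $\tim$: it also involves $M\setminus W$, whose constant sheaf has micro-support in $\lambda$ (Theorem~\ref{th:gammasets}), i.e.\ on the same side as $\SSi(F)$, so no transversality is available; even for $F=\cor_{\futg{U}}\in\Derb_{\gammc}(\cor_M)$ the required vanishing is a nontrivial statement about the topology of $\futg{U}$ near its boundary.

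This is exactly the difficulty the paper's proof is built to avoid: it pushes forward first, setting $G=\roim{\tim}\rsect_{\futg{U}}F$, uses the properness of $\tim$ on $\ol{\futg{U}}$ to get $\SSi(G)\subset\{(t;\tau);\tau\leq0\}$ with $\supp(G)\subset[t_0-a,+\infty)$, and then propagates on $\R$, where there is no lateral boundary. To salvage your route you would have to supply the missing vanishing on $Z_s\cap\partial\futg{U}$, which appears to be of the same order of difficulty as the lemma itself; pushing forward by $\tim$ before propagating is the cleaner fix.
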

  \begin{proof}
 (i) Set $U^+=\futg{U}$,   $G=\roim{\tim}\rsect_{U^+}F$. Note that, by the hypotheses, the function $\tim$ is proper on 
 $\ol{\futg{U}}$. Moreover, it is  increasing for the order $\preceqg$ by Lemma~\ref{lem:timorder},
  hence $\tim(U^+)\subset[t_0-a,+\infty)$. 
 
\spa
 (ii) The set $U^+$  being a $\gamma$-open subset, we get  $\SSi(\rsect_{U^+}F)\subset \lambda$.  Since $\tim$ is proper on $\supp(\rsect_{U^+}F)$,   $\SSi(G)\subset\{(t;\tau);t\geq t_0-a,\tau\leq0\}$ and it follows that 
 $\rsect(\R;G)\simeq\rsect((t_0-a,+\infty);G)\isoto \rsect((t_0-a,t_0+a);G)$.
  
 \spa
 (iii) One has  $\rsect(\futg{U};F) \simeq\rsect(M;\rsect_{U^+}F)\simeq \rsect(\R;G)$ and 
$\rsect((t_0-a,t_0+a);G)\simeq  \rsect(\futg{U}\cap \opb{\tim}(t_0-a,t_0+a));F)$.
\end{proof}

 \begin{proposition}\label{pro:ff}
 Let $(M,\gamma,\tim)$ be  a $G^+$-causal manifold.
 Then the functor
 $\roim{\phig}\cl \Derb_{\gammc}(\cor_M) \to\Derb(\cor_{\Mg})$ is fully faithful.
\end{proposition}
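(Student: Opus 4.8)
The plan is to show that the unit map $F \to \roim{\phig}\opb{\phig}\,\roim{\phig}F$ composed appropriately is an isomorphism, but it is cleaner to prove full faithfulness directly by showing that for $F, G \in \Derb_{\gammc}(\cor_M)$ the natural map
\[
\Hom_{\Derb(\cor_M)}(F,G) \longrightarrow \Hom_{\Derb(\cor_{\Mg})}(\roim{\phig}F, \roim{\phig}G)
\]
is an isomorphism. By adjunction $(\opb{\phig}, \roim{\phig})$ the right-hand side is $\Hom_{\Derb(\cor_M)}(\opb{\phig}\roim{\phig}F, G)$, so it suffices to prove that the adjunction counit $\opb{\phig}\roim{\phig}F \to F$ is an isomorphism for every $F \in \Derb_{\gammc}(\cor_M)$. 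Equivalently, since the statement is local on $M$, I would check that this morphism induces an isomorphism on stalks, i.e. on $\rsect(\{x_0\}; -)$ computed as a filtered colimit over neighborhoods of $x_0$.

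The key computation is to identify the stalk of $\roim{\phig}F$ at $x_0$. By the very definition of the inverse image along $\phig$ and the fact that the $\gamma$-open sets form a basis of neighborhoods for the $\gamma$-topology, the stalk $(\opb{\phig}\roim{\phig}F)_{x_0}$ is the filtered colimit of $\rsect(W; F)$ over $\gamma$-open sets $W$. Using Corollary~\ref{cor:gopen}, the sets $\futg{U}$ for $U$ ranging over a neighborhood basis of $x_0$ are cofinal among such $\gamma$-open neighborhoods, so
\[
(\opb{\phig}\roim{\phig}F)_{x_0} \simeq \varinjlim_{U \ni x_0} \rsect(\futg{U}; F),
\]
whereas the stalk $F_{x_0}$ is $\varinjlim_{U \ni x_0} \rsect(U; F)$. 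Thus the heart of the matter is to show that, in the colimit over shrinking $U$, restricting from $\futg{U}$ to $U$ is an isomorphism. This is exactly where Lemma~\ref{lem:germoim1} enters: fixing $a>0$ and choosing $U$ relatively compact with $\tim(U) \subset (t_0-a, t_0+a)$, the lemma gives
\[
\rsect(\futg{U}; F) \isoto \rsect\bigl(\futg{U} \cap \opb{\tim}(t_0-a, t_0+a); F\bigr).
\]

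The remaining step, which I expect to be the main obstacle, is to pass from this ``truncated future'' back to $U$ itself in the colimit. The properness of $\tim$ on $\futl{x_0}$ together with the defining property $\opb{\tim}\tim(x_0) \cap \futl{x_0} = \{x_0\}$ forces the truncated sets $\futg{U} \cap \opb{\tim}(t_0-a, t_0+a)$ to contract onto $x_0$ as $U$ shrinks and $a \to 0$: their intersection over all such $U$ and $a$ is contained in $\opb{\tim}(t_0) \cap \futl{x_0} = \{x_0\}$. Using the strictness of the future time function (Definition~\ref{def:Gcausal}~(b)), the family $\{\futg{U} \cap \opb{\tim}(t_0-a, t_0+a)\}_{U,a}$ is cofinal in the neighborhood system of $x_0$, so taking the colimit over both $U$ and $a$ of the isomorphism above yields $\varinjlim \rsect(\futg{U};F) \simeq F_{x_0}$. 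Combining the displays identifies the counit as an isomorphism on stalks, which gives full faithfulness. The delicate point is the cofinality argument interchanging the two colimits, where one must invoke strictness and properness carefully to ensure that shrinking $U$ and $a$ simultaneously produces a genuine neighborhood basis of $x_0$ inside $M$ rather than merely inside the $\gamma$-topology.
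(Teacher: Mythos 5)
Your proposal is correct and follows essentially the same route as the paper: reduce to the counit $\opb{\phig}\roim{\phig}F\to F$ being an isomorphism on stalks, identify the stalk of $\roim{\phig}F$ via the cofinal family $\futg{U}$, apply Lemma~\ref{lem:germoim1}, and conclude with the strictness condition of Definition~\ref{def:Gcausal}~(b). The ``delicate cofinality point'' you flag at the end is precisely what strictness is defined to guarantee, so no further argument is needed there.
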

\begin{proof}
It is enough to prove the isomorphisms for all $j\in\Z$ and  $x_0\in M$
\eqn
&&H^j(\opb{\phig}\roim{\phig}F)_{x_0}\isoto H^j(F_{x_0}).
\eneqn
The left-hand side is obtained by applying $H^j$ and  taking the colimit with respect to the open neighborhoods $U$ of $x_0$ of $\rsect(\futg{U};F)$. The right-hand side is similarly obtained when replacing $\rsect(\futg{U};F)$ with $\rsect(U;F)$. 

Since the family $\{\futg{U}\cap  \opb{\tim}(t_0-a,t_0+a)\}_{U,a}$ is a basis of open neighborhoods of $x_0$ by Definition~\ref{def:Gcausal}, the result follows from Lemma~\ref{lem:germoim1}.
\end{proof}    

\subsubsection*{Main theorem}

 \begin{proposition}\label{pro:basic}
Let $f\cl X\to Y$ be a continuous map. 
and let $\sht$ be a full triangulated subcategory of $\Derb(\cor_X)$. Assume:
\banum
\item
 $f$ is surjective,
\item
 the functor $\opb{f}\cl\Derb(\cor_Y)\to \Derb(\cor_X)$ takes its values in $\sht$,
 \item
 the functor $\roim{f}\cl\sht\to \Derb(\cor_Y)$ is fully faithful. 
 \eanum
 Then $\roim{f}\cl \sht\to \Derb(\cor_Y)$  is an equivalence.
\end{proposition}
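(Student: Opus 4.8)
The plan is to deduce the result from the adjunction $(\opb{f},\roim{f})$ together with the general principle that a fully faithful right adjoint whose left adjoint is conservative is automatically an equivalence. First I would record that $\opb{f}$ is left adjoint to $\roim{f}$. By hypothesis (b), $\opb{f}$ takes its values in $\sht$, and since $\sht$ is a \emph{full} subcategory of $\Derb(\cor_X)$, the adjunction isomorphism $\mathrm{Hom}_{\Derb(\cor_X)}(\opb{f}A,B)\simeq\mathrm{Hom}_{\Derb(\cor_Y)}(A,\roim{f}B)$ for $B\in\sht$ identifies $\opb{f}\cl\Derb(\cor_Y)\to\sht$ as a left adjoint of $\roim{f}\cl\sht\to\Derb(\cor_Y)$. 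Denote by $\eta\cl\mathrm{id}\to\roim{f}\opb{f}$ and $\epsilon\cl\opb{f}\roim{f}\to\mathrm{id}_{\sht}$ the unit and counit of this adjunction. Hypothesis (c) --- the full faithfulness of $\roim{f}$ --- is equivalent to $\epsilon$ being an isomorphism.

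Since $\roim{f}$ is already fully faithful, proving it is an equivalence amounts to proving it is essentially surjective, and for this it suffices to show that the unit $\eta_A\cl A\to\roim{f}\opb{f}A$ is an isomorphism for every $A\in\Derb(\cor_Y)$, as then $A$ lies in the essential image of $\roim{f}$. Here I would \emph{not} try to verify this directly: indeed $\roim{f}\opb{f}A\simeq A$ fails in general for a mere surjection, the cohomology of the fibres intervening. Instead, applying $\opb{f}$ to $\eta_A$ and invoking the triangle identity $\epsilon_{\opb{f}A}\circ\opb{f}(\eta_A)=\mathrm{id}_{\opb{f}A}$, together with the fact that $\epsilon_{\opb{f}A}$ is an isomorphism, shows that $\opb{f}(\eta_A)$ is an isomorphism. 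Thus it is enough to prove that $\opb{f}$ is conservative, that is, reflects isomorphisms.

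The conservativity of $\opb{f}$ is the only step carrying geometric content, and it is exactly where surjectivity (hypothesis (a)) enters. A morphism $\phi$ in $\Derb(\cor_Y)$ is an isomorphism if and only if it induces isomorphisms on all cohomology stalks at every point of $Y$. Since $\opb{f}$ is exact and the stalk of $\opb{f}G$ at $x\in X$ is $G_{f(x)}$, one has $(\opb{f}\phi)_x=\phi_{f(x)}$; hence if $\opb{f}\phi$ is an isomorphism, then $\phi_{f(x)}$ is an isomorphism for all $x\in X$, and as $f$ is surjective this forces $\phi_y$ to be an isomorphism for every $y\in Y$, whence $\phi$ is an isomorphism. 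Combining the two steps, $\eta_A$ is an isomorphism for all $A$, so $\roim{f}$ is essentially surjective, hence an equivalence, with quasi-inverse $\opb{f}$. I expect the only delicate point to be the bookkeeping of the two triangle identities; the actual mathematics sits entirely in the conservativity observation, and the temptation to establish $\roim{f}\opb{f}\simeq\mathrm{id}$ by a direct sheaf computation is precisely the pitfall this argument is designed to bypass.
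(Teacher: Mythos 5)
Your proof is correct and follows essentially the same route as the paper: both reduce the statement to showing that the unit $\mathrm{id}\to\roim{f}\opb{f}$ is an isomorphism, and both verify this using the counit isomorphism from hypothesis (c) together with the stalk formula $(\opb{f}G)_x\simeq G_{f(x)}$ and the surjectivity of $f$. Your version is in fact slightly more careful than the paper's, since invoking the triangle identity $\epsilon_{\opb{f}A}\circ\opb{f}(\eta_A)=\mathrm{id}$ guarantees that the stalkwise isomorphisms are induced by the unit morphism itself, a naturality point the paper leaves implicit.
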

\begin{proof}
It is enough to prove that $\opb{f}$ is fully faithful, that is 
$\roim{f}\circ\opb{f}\simeq\id$. 

Let $G\in \Derb(\cor_Y)$ and set $F=\opb{f}G$. Let $y\in Y$ and choose $x\in X$ with $f(x)=y$. By the hypothesis, 
$(\opb{f}\roim{f}F)_{x}\simeq F_{x}$, hence $(\roim{f}F)_{y}\simeq F_{x}$.
Therefore, $(\roim{f}\opb{f}G)_{y}\simeq (\opb{f}G)_{x}$. 
The result follows since $(\opb{f}G)_{x}\simeq G_{y}$. 
\end{proof}

Applying  Propositions~\ref{pro:main2},~\ref{pro:ff} and~\ref{pro:basic}, with $X=M$, $Y=\Mg$ and $\sht=\Derb_{\gammc}(\cor_M)$,  we get:

\begin{theorem}\label{th:main}
Let $(M,\gamma,\tim)$ be a $G^+$-causal manifold. 
Then the functor
$\roim{\phig}\cl\Derb_{\gammc}(\cor_M)\to\Derb(\cor_{M_\gamma})$ is an equivalence of categories with quasi-inverse $\opb{\phig}$.
\end{theorem}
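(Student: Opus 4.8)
The plan is to deduce the theorem from the abstract categorical criterion of Proposition~\ref{pro:basic}, applied to the continuous map $f=\phig\cl M\to M_\gamma$, taking $X=M$, $Y=M_\gamma$ and $\sht=\Derb_{\gammc}(\cor_M)$. The whole task then reduces to checking the three hypotheses of that proposition. Hypothesis (a), surjectivity of $\phig$, is immediate: by Definition~\ref{def:gammatop} the map $\phig$ is associated with the identity of the underlying set $M$, so it is a bijection on points. Hypothesis (b), that $\opb{\phig}$ takes its values in $\Derb_{\gammc}(\cor_M)$, is exactly Proposition~\ref{pro:main2}. Hypothesis (c), full faithfulness of $\roim{\phig}$ on $\Derb_{\gammc}(\cor_M)$, is exactly Proposition~\ref{pro:ff}, and it is here that the strict future time function $\tim$ is used in an essential way.

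With the three hypotheses in place, Proposition~\ref{pro:basic} yields that $\roim{\phig}$ is an equivalence. Reading off its proof, the equivalence is produced by establishing the isomorphism $\roim{\phig}\circ\opb{\phig}\simeq\id$ stalk-wise—using surjectivity of $\phig$ to transport the computation from a point $y\in M_\gamma$ to a preimage $x\in M$—which together with the full faithfulness of $\roim{\phig}$ from hypothesis (c) exhibits $\opb{\phig}$ as quasi-inverse, precisely as asserted in the statement. So the final assembly is purely formal once the supporting propositions are available.

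The main obstacle is therefore not in this last step but in hypothesis (c), already treated in Proposition~\ref{pro:ff}. There the point is to identify, for each $x_0\in M$ and each $j$, the stalk $H^j(\opb{\phig}\roim{\phig}F)_{x_0}$ with $H^j(F_{x_0})$. The former is a colimit of the sections $\rsect(\futg{U};F)$ over the neighborhoods $U$ of $x_0$, and the crux is the propagation isomorphism $\rsect(\futg{U};F)\isoto\rsect(\futg{U}\cap\opb{\tim}(t_0-a,t_0+a);F)$ of Lemma~\ref{lem:germoim1}. This rests on two features of a $G^+$-causal structure that I would isolate carefully: first, the micro-support bound $\SSi(\rsect_{\futg{U}}F)\subset\lambda$ (valid since $\futg{U}$ is $\gamma$-open) combined with properness of $\tim$ on the $\lambda$-future $\futl{U}$, which forces the micro-support of $\roim{\tim}\rsect_{\futg{U}}F$ into $\{\tau\le0\}$ and thereby lets one shrink the interval in the $t$-variable without changing cohomology; and second, the strictness of $\tim$, which guarantees that the sets $\futg{U}\cap\opb{\tim}(t_0-a,t_0+a)$ genuinely form a neighborhood basis at $x_0$, so that the colimit collapses to the desired stalk. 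Once both inputs are secured, full faithfulness follows, and with it the theorem.
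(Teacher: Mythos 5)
Your proposal is correct and follows exactly the paper's own proof, which consists precisely in applying Proposition~\ref{pro:basic} with $X=M$, $Y=M_\gamma$, $\sht=\Derb_{\gammc}(\cor_M)$, verifying its three hypotheses via the surjectivity (indeed bijectivity on points) of $\phig$, Proposition~\ref{pro:main2}, and Proposition~\ref{pro:ff}. Your additional commentary on where the $G^+$-hypothesis enters (properness of $\tim$ on $\lambda$-futures and the strictness giving a neighborhood basis) accurately reflects the content of Lemma~\ref{lem:germoim1} and Proposition~\ref{pro:ff}.
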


\begin{example}\label{exa:circle}
Let $M=\BBS^1$ be the circle and denote by $x$ a coordinate (hence, $x=x+2\pi$) on $M$, $(x;\xi)$ a coordinates system on $T^*M$.
Consider the cone $\gamma=\{(x;w);w\geq0\}\subset TM$. Then $(M,\gamma)$ is a causal manifold, $\gammc=\{(x;\xi);\xi\leq0\}$. The only $\gamma$-open sets are $M$ and $\varnothing$. Therefore, the only sheaves on $M_{\gamma}$ are the  constant sheaves.  
 The functor $\opb{\phig}$ is not essentially surjective. Indeed, the sheaf $\cor_I$ belongs to $\Derb_{\gammc}(\cor_M)$ for $I$ the interval $(a,b]$, $\vert b-a\vert<2\pi$. 
 This example does not contradict Theorem~\ref{th:main} since $(M,\gamma)$ is not $G^+$-causal.
 \end{example}

The next useful result already appeared in~\cite{Gu19}*{Cor.~III.~4.3~(ii)}. For the reader's convenience we repeat its statement and give a slightly more developed proof. 
\begin{corollary}
Let $M$ be a manifold and let $\lambda$ be a closed convex proper cone in $T^*M$. Let $F\in\Derb(\cor_M)$. Then 
$\SSi(F)\subset\lambda$ if and only if $\SSi(H^j(F))\subset\lambda$ for all $j\in\Z$.
\end{corollary}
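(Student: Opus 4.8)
The statement divides into two implications, of which only one uses the convexity of $\lambda$. The implication ``$\SSi(H^j(F))\subset\lambda$ for all $j$ $\Rightarrow$ $\SSi(F)\subset\lambda$'' holds for any closed conic $\lambda$ and is the elementary one, which I would dispatch first. Since $F$ is bounded, the truncation triangles $\tau_{\leq j}F\to F\to\tau_{>j}F\to[+1]$ exhibit $F$ as an iterated extension of the shifted cohomology objects $H^j(F)[-j]$ by finitely many distinguished triangles, and the estimate of the micro-support under distinguished triangles (\cite{KS90}) then gives $\SSi(F)\subset\bigcup_j\SSi(H^j(F)[-j])=\bigcup_j\SSi(H^j(F))\subset\lambda$.

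For the converse, assume $\SSi(F)\subset\lambda$; the goal is $\SSi(H^j(F))\subset\lambda$ for every $j$. The plan is to feed $F$ through the equivalence of Theorem~\ref{th:main} and to exploit that $\opb{\phig}$ is $t$-exact. Indeed, whenever $(M,\gamma,\tim)$ is $G^+$-causal with $\gammc=\lambda$, Theorem~\ref{th:main} gives $F\isoto\opb{\phig}\roim{\phig}F$; setting $G=\roim{\phig}F\in\Derb(\cor_{\Mg})$ and using that inverse image along a continuous map is exact, so that $H^j(\opb{\phig}G)\simeq\opb{\phig}(H^j(G))$, one obtains $H^j(F)\simeq\opb{\phig}(H^j(G))$. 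Now $H^j(G)$ is a genuine sheaf on the space $\Mg$, whence Proposition~\ref{pro:main2} yields $\SSi(\opb{\phig}(H^j(G)))\subset\gammc=\lambda$, which is exactly the desired conclusion.

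The remaining task is to produce the $G^+$-causal structure, and this is where the work sits, since no future time function is assumed globally. The problem is local on $T^*M$: fixing $x_0$ and setting $\gamma=\lambda^{\circ a}$ so that $\gammc=\lambda$, it suffices to prove $\SSi(H^j(F))_{x_0}\subset\lambda_{x_0}$. For each constant cone $(U_i,\theta_i)$ contained in $\gamma$ with $U_i$ a small convex neighborhood of $x_0$, Remark~\ref{rem:loctime} provides a future time function making $(U_i,U_i\times\theta_i,\tim_i)$ a $G^+$-causal manifold; moreover $\theta_i\subset\gamma_x$ forces $\lambda_x\subset\theta_i^{\circ a}$, so $F\vert_{U_i}\in\Derb_{U_i\times\theta_i^{\circ a}}(\cor_{U_i})$. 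Applying the mechanism of the previous paragraph to each $(U_i,U_i\times\theta_i,\tim_i)$ gives $\SSi(H^j(F))\cap\opb{\pi_M}(U_i)\subset U_i\times\theta_i^{\circ a}$. Finally, taking a basis of constant cones so that $\bigcup_{x_0\in U_i}\theta_i=\Int(\gamma_{x_0})$ (Definition~\ref{def:ctcone}) and intersecting, exactly as at the end of the proof of Proposition~\ref{pro:main2}, I would conclude $\SSi(H^j(F))_{x_0}\subset\bigcap_i\theta_i^{\circ a}=(\Int\gamma_{x_0})^{\circ a}=\gamma_{x_0}^{\circ a}=\lambda_{x_0}$. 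The one genuinely delicate point, and the main obstacle, is this reduction to a causal structure: if $\lambda$ is not fibrewise full-dimensional then $\gamma=\lambda^{\circ a}$ is not proper and $(M,\gamma)$ is not causal, so one should first write $\lambda$ as an intersection of closed convex proper cones with non-empty interior and treat each factor separately, using that $\SSi(\,\cdot\,)\subset\lambda$ is equivalent to containment in every factor.
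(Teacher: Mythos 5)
Your proof is correct and follows essentially the same mechanism as the paper's: localize at a point, replace $\lambda$ there by constant closed convex proper cones with non-empty interior, use the equivalence $F\simeq\opb{\phig}\roim{\phig}F$ for the associated constant-cone $\gamma$-topology together with the exactness of $\opb{\phig}$ and the micro-support bound on its image, then intersect over the cones. The only difference is that the paper implements from the outset the ``fix'' you defer to your last sentence --- it never forms $\gamma=\lambda^{\circ a}$ (which, as you note, need not be a causal structure), but directly chooses constant cones $\lambda'$ with $\lambda_x\subset\Int(\lambda')$ and invokes the linear Proposition~\ref{pro:eqv1} rather than Theorem~\ref{th:main}, so no local time function is needed.
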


\begin{proof}
It is enough to check that if $\SSi(F)\subset\lambda$, then the same property holds for all $H^j(F)$'s, the other implication being well-known.

Let $x\in M$ and assume that $\SSi(F)\cap T^*_xM\subset\lambda_x$. We may assume that $M$ is open in some vector space $\BBV$. Choose 
 a closed convex proper cone  $\lambda'\subset\BBV^*$ such that $\lambda_x\subset\Int(\lambda')$. Then $U\times_M\lambda\subset U\times\lambda'$ for an open neighborhood $U$ of $x$. Setting 
 $\gamma=U\times\lambda^{\prime\circ a}$, we have $F\vert_U\simeq\opb{\phig}\roim{\phig}F\vert_U$ and therefore
 (we  write $F$ instead of $F\vert_U$ for short):
\eqn
&&H^j(F)\simeq \opb{\phig}H^j\roim{\phig}F.
\eneqn
By Proposition~\ref{pro:eqv1} (with other notations), we get 
$\SSi(H^j(F))\cap T^*_{x}M\subset\lambda'$ and thus 
$\SSi(H^j(F))\cap T^*_{x}M\subset\lambda_x$. 
\end{proof}
\providecommand{\bysame}{\stLeavevmode\hbox to3em{\hrulefill}\thinspace}

\vspace*{1cm}
\noindent
\parbox[t]{21em}
{\scriptsize{
Pierre Schapira\\
Sorbonne Universit{\'e}, CNRS IMJ-PRG\\
4 place Jussieu, 75252 Paris Cedex 05 France\\
e-mail: pierre.schapira@imj-prg.fr\\
http://webusers.imj-prg.fr/\textasciitilde pierre-schapira/
}}

\end{document}